\documentclass[reqno]{amsart}


\usepackage{amsfonts,amssymb,graphicx,float}
\usepackage[arrow,matrix,curve]{xy}

\restylefloat{figure}

\makeatletter                                                  %
\def\th@plain{\slshape}                                        %
\makeatother                                                   %

\newcommand{\oi}{[0,1]}

\newcommand{\Nbb}{\mathbb{N}}
\newcommand{\Zbb}{\mathbb{Z}}
\newcommand{\Qbb}{\mathbb{Q}}
\newcommand{\Rbb}{\mathbb{R}}

\newcommand{\Rp}{\mathbb{R}_{\ge0}}

\newcommand{\monk}{M\"onkemeyer}

\newcommand{\dyadic}{\Zbb[1/2]}

\newcommand{\Bcal}{\mathcal{B}}
\newcommand{\Fcal}{\mathcal{F}}
\newcommand{\stochastic}{\mathfrak{K}}

\newcommand{\Ccal}{\mathcal{C}}
\newcommand{\Dcal}{\mathcal{D}}
\newcommand{\Scal}{\mathcal{S}}
\newcommand{\Tcal}{\mathcal{T}}
\newcommand{\Vcal}{\mathcal{V}}
\newcommand{\abar}{\bar a}
\newcommand{\bbar}{\bar b}
\newcommand{\Btilde}{\tilde B}
\newcommand{\cantor}{\{0,1\}^\Nbb}

\newcommand{\lland}{\:\text{\&}\:}

\newcommand{\newword}[1]{\textsl{#1}}
\newcommand{\pp}[1]{\mathbf{#1}}
\newcommand{\vect}[3]{#1_#2,\ldots ,#1_#3}

\newcommand{\abs}[1]{\lvert#1\rvert}

\newcommand{\angles}[1]{\langle #1 \rangle}

\DeclareMathSymbol{\upharpoonright}{\mathrel}{AMSa}{"16}
\let\restriction\upharpoonright
\DeclareMathSymbol{\nmid}{\mathrel}{AMSb}{"2D}

\DeclareMathOperator{\mesh}{mesh}
\DeclareMathOperator{\GO}{GO}
\DeclareMathOperator{\EPO}{EPO}

\DeclareMathOperator{\GL}{GL}

\DeclareMathOperator{\PSL}{PSL}

\DeclareMathOperator{\Id}{Id}
\DeclareMathOperator{\Mat}{Mat}

\theoremstyle{plain}
\newtheorem{theorem}{Theorem}[section]
\newtheorem{lemma}[theorem]{Lemma}
\newtheorem{proposition}[theorem]{Proposition}
\newtheorem{corollary}[theorem]{Corollary}

\theoremstyle{definition}
\newtheorem{definition}[theorem]{Definition}

\begin{document}

\bibliographystyle{plain}

\sloppy

\title[An $n$-dimensional Minkowski function]{Multidimensional continued fractions\\
and a Minkowski function}

\author[G. Panti]{Giovanni Panti}
\address{Department of Mathematics\\
University of Udine\\
via delle Scienze 208\\
33100 Udine, Italy}
\email{panti@dimi.uniud.it}

\begin{abstract}
The Minkowski Question Mark function can be characterized as the unique homeomorphism of the real unit interval that conjugates the Farey map with the tent map. We construct an $n$-dimensional analogue of the Minkowski function as the only homeomorphism of an $n$-simplex that conjugates the piecewise-fractional map associated to the \monk{} continued fraction algorithm with an appropriate tent map.
\end{abstract}

\keywords{Minkowski Question Mark function, multidimensional continued fractions}

\thanks{\emph{2000 Math.~Subj.~Class.}: 11J70; 37A45}

\maketitle

\section{Preliminaries}\label{ref2}

The $n$th order \newword{Farey set} $\Fcal_n$ in the real unit interval $\oi$
is defined by recursion: one starts with $\Fcal_0=\{0/1,1/1\}$ and obtains $\Fcal_n$ by adding to $\Fcal_{n-1}$ all the \newword{Farey sums} $v_1\oplus v_2=(a_1+a_2)/(b_1+b_2)$ of two consecutive elements $v_i=a_i/b_i$ of $\Fcal_{n-1}$. The union of all the $\Fcal_n$'s is the set of all rational numbers in $\oi$. Analogously, by starting with $\Bcal_0=\Fcal_0$ and replacing the Farey sum with the \newword{barycentric sum} $v_1\boxplus v_2=(v_1+v_2)/2$, we obtain an increasing sequence $\Bcal_0\subset\Bcal_1\subset\Bcal_2\subset\cdots$, whose union is the set of all dyadic rationals in $\oi$. For every $n\ge0$, there exists a unique order-preserving bijection from $\Fcal_n$ to $\Bcal_n$. The union of these bijections is a bijection from $\bigcup_{n\ge0}\Fcal_n$ to $\bigcup_{n\ge0}\Bcal_n$, which extends uniquely by continuity to an order-preserving bijection $\Phi:\oi\to\oi$. This last map is the Minkowski Question Mark function~\cite{denjoy38}, \cite{salem43}, \cite{kinney60}, \cite{viaderparjabi98}. Among others, $\Phi$ has the following properties:
\begin{enumerate}
\item it is an order-preserving homeomorphism of $\oi$;
\item it maps bijectively the rational numbers to the dyadic rationals, and the real algebraic numbers of degree $\le2$ to the rationals (all these sets restricted to $\oi$, of course);
\item it is singular w.r.t.~the Lebesgue measure $\lambda$ (i.e., there exists a measurable set $A\subseteq\oi$ such that $\lambda(A)=1$ and $\lambda\bigl(\Phi[A]\bigr)=0$);
\item it has a fractal structure ---which is apparent in the following approximate sketch---
\begin{figure}[H]
\begin{center}
\includegraphics[height=3.5cm,width=3.5cm]{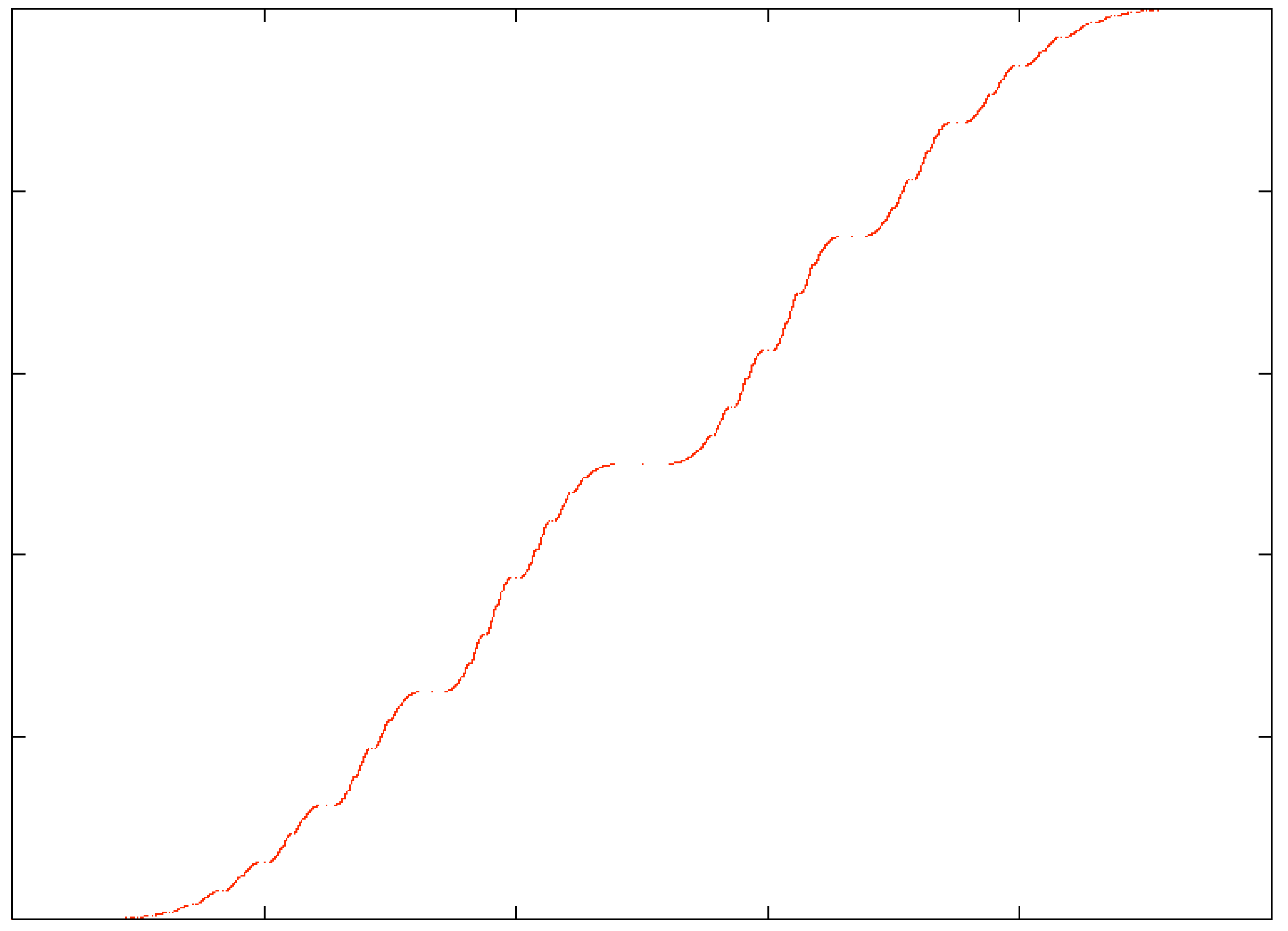}
\end{center}
\end{figure}
\item it conjugates the \newword{Farey map}
$$
F(x)=\begin{cases}
x/(1-x), & \text{if $0\le x<1/2$;}\\
(1-x)/x, & \text{if $1/2\le x\le 1$;}
\end{cases}
$$
with the \newword{tent map}
$$
T(x)=\begin{cases}
2x, & \text{if $0\le x<1/2$;}\\
2-2x, & \text{if $1/2\le x\le 1$.}
\end{cases}
$$
\end{enumerate}

Property~(4) means, more precisely, the following: let $v_1<v_2$ be consecutive elements of some $\Fcal_n$. Then there exists a unique element
$\bigl( \begin{smallmatrix} 
a&b\\ c&d 
\end{smallmatrix} \bigr)$
of $\PSL_2\Zbb$ such that the corresponding fractional-linear transformation $G(x)=(ax+b)/(cx+d)$ maps $v_1$ to $0$ and $v_2$ to $1$. Analogously, there is a unique affine transformation $H(x)=rx+s$ such that $H(\Phi(v_1))=0$ and $H(\Phi(v_2))=1$. One then checks easily that $\Phi=H\circ(\Phi\restriction[v_1,v_2])\circ G^{-1}$.

We note the following for future reference.

\begin{proposition}
Property~(5)\label{ref3} characterizes $\Phi$.
\end{proposition}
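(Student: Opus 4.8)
The plan is to prove the stronger statement that \emph{every} homeomorphism $\Psi$ of $\oi$ satisfying the conjugacy relation $\Psi\circ F=T\circ\Psi$ of property~(5) equals $\Phi$; since $\Phi$ is such a map by properties~(1) and~(5), this is exactly the desired characterization. First I would normalize $\Psi$ on $\{0,\tfrac12,1\}$. A continuous bijection of $\oi$ is strictly monotone, so it permutes the endpoints; because $F(0)=0$, the conjugacy forces $\Psi(0)=T(\Psi(0))$ to be a fixed point of $T$, and since $T(1)=0\neq1$ this excludes $\Psi(0)=1$. Hence $\Psi(0)=0$, $\Psi(1)=1$, and $\Psi$ is order-preserving. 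Each branch of $F$ and of $T$ is strictly monotone, so $1$ has a single preimage under each, namely $F^{-1}(1)=T^{-1}(1)=\{\tfrac12\}$; then $\Psi\circ F=T\circ\Psi$ and $\Psi(1)=1$ give $T(\Psi(\tfrac12))=1$, i.e.\ $\Psi(\tfrac12)=\tfrac12$.

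Next I would extract two functional equations. For $y\in(0,1)$ the monotone branches give $F^{-1}(y)=\{y/(1+y),\,1/(1+y)\}$ with $y/(1+y)<\tfrac12<1/(1+y)$, and $T^{-1}(y)=\{y/2,\,1-y/2\}$ with $y/2<\tfrac12<1-y/2$. Using that $\Psi$ is a bijection, the conjugacy says that $\Psi$ maps $F^{-1}(y)$ onto $T^{-1}(\Psi(y))$; being order-preserving and fixing $\tfrac12$, it must carry the preimage lying below $\tfrac12$ to the preimage lying below $\tfrac12$, whence
$$
\Psi\!\Bigl(\frac{y}{1+y}\Bigr)=\frac{\Psi(y)}{2},\qquad
\Psi\!\Bigl(\frac{1}{1+y}\Bigr)=1-\frac{\Psi(y)}{2}
$$
for $y\in(0,1)$, and then for every $y\in\oi$ after checking $y=0,1$ against the previous step. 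Since $\Phi$ is itself a homeomorphism conjugating $F$ with $T$, it satisfies the same two identities (this is also property~(4) for the intervals $[0,\tfrac12]$ and $[\tfrac12,1]$).

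Finally I would close with a contraction argument. Put $\sigma_0(y)=y/(1+y)$, $\sigma_1(y)=1/(1+y)$ and $\tau_0(t)=t/2$, $\tau_1(t)=1-t/2$, all self-maps of $\oi$, the $\tau_i$ being affine of slope $\pm\tfrac12$. The functional equations read $\Psi\circ\sigma_i=\tau_i\circ\Psi$ and $\Phi\circ\sigma_i=\tau_i\circ\Phi$ for $i=0,1$, hence $\Psi\circ\sigma_w=\tau_w\circ\Psi$ and $\Phi\circ\sigma_w=\tau_w\circ\Phi$ for every finite word $w=i_1\cdots i_n\in\ooii^n$, where $\sigma_w=\sigma_{i_1}\circ\cdots\circ\sigma_{i_n}$ and likewise for $\tau_w$. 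From $\sigma_0(\oi)\cup\sigma_1(\oi)=\oi$ one gets $\bigcup_{|w|=n}\sigma_w(\oi)=\oi$ for every $n$, while $\tau_w(\oi)$ is a closed interval of length $2^{-n}$ when $|w|=n$. So, given $x\in\oi$ and $n$, choosing $w$ of length $n$ with $x\in\sigma_w(\oi)$ places both $\Psi(x)$ and $\Phi(x)$ in $\tau_w(\oi)$, whence $\abs{\Psi(x)-\Phi(x)}\le2^{-n}$; letting $n\to\infty$ yields $\Psi=\Phi$. The only delicate point is the branch bookkeeping in the middle step---matching the lower branch of $F$ with the lower branch of $T$---and this is exactly what the normalization $\Psi(\tfrac12)=\tfrac12$ makes legitimate; the rest is the standard argument that two solutions of a single contractive functional system coincide.
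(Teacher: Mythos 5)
Your proof is correct, and it is a legitimate alternative to the paper's. The paper's argument for this proposition is shorter: after pinning down $\Psi(0)=0$ (via the topological observation that $0$ is the only fixed point of $F$, resp.\ $T$, whose removal leaves the interval connected), it notes that $\Fcal_n=F^{-(n+1)}\{0\}$ and $\Bcal_n=T^{-(n+1)}\{0\}$, so the conjugacy forces $\Psi$ to restrict to the unique order-preserving bijection $\Fcal_n\to\Bcal_n$ for every $n$; since $\Phi$ is by definition the continuous extension of the union of these bijections, $\Psi=\Phi$ by density. Your route instead normalizes $\Psi$ on $\{0,\tfrac12,1\}$, derives the two-branch functional system $\Psi\circ\sigma_i=\tau_i\circ\Psi$ from the order-preserving matching of the preimage pairs, iterates it to confine both $\Psi(x)$ and $\Phi(x)$ in a single $\tau_w(\oi)$ of length $2^{-n}$, and concludes by letting $n\to\infty$. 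This avoids invoking the explicit Farey/barycentric construction of $\Phi$ (you only need that $\Phi$ itself satisfies the conjugacy), and it is in fact closer in spirit to the uniqueness argument the paper uses later for the $n$-dimensional Theorem~\ref{ref1}, where the convergence of nested simplices $\bigcap_t\Gamma_{\abar\restriction t}$ plays exactly the role your shrinking intervals $\tau_w(\oi)$ play here. The paper's version buys brevity by leaning on the definition of $\Phi$; yours buys self-containment and generalizability at the cost of a bit of branch bookkeeping, which you handle correctly by first fixing $\Psi(\tfrac12)=\tfrac12$.
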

\begin{proof}
Let $\Psi$ be a homeomorphism of $\oi$ such that $T=\Psi\circ F\circ\Psi^{-1}$. The only point which is fixed by $F$ (respectively, $T$), and whose removal does not disconnect $\oi$ is $0$; therefore $\Psi(0)=0$ and $\Psi$ is order-preserving. For every $n\ge0$ we have $\Fcal_n=F^{-(n+1)}\{0\}$ and $\Bcal_n=T^{-(n+1)}\{0\}$. Hence, for every $n$, $\Psi$ restricts to a bijection between $\Fcal_n$ and $\Bcal_n$. Since these bijections are order-preserving, $\Psi$ must coincide with $\Phi$.
\end{proof}

In~\cite{beavergar04} a generalization of the Minkowski function to a selfmap $\delta$ of a $2$-dimensional triangle is proposed. The construction of $\delta$ proceeds in stages, and parallels that for $\Phi$: assume that $\angles{v^1_1,v^1_2,v^1_3}$ and $\angles{v^2_1,v^2_2,v^2_3}$ are paired triangles that appear at the $(n-1)$th stage of the construction ``on the Farey side'' and ``on the barycentric side'', respectively. Then, at the $n$th stage, $\angles{v^i_1,v^i_2,v^i_3}$ is subdivided into three subtriangles $\angles{v^i_1,v^i_2,w^i}$, $\angles{v^i_1,w^i,v^i_3}$, $\angles{w^i,v^i_2,v^i_3}$, where $w^1$ is the Farey sum of $v^1_1,v^1_2,v^1_3$, and $w^2$ is the barycentric sum of $v^2_1,v^2_2,v^2_3$. The new triangles are paired in the obvious way, and the function $\delta$ is defined using an appropriate limiting process.
This $\delta$ function is not injective, nor is continuous at all points~\cite[p.~117]{beavergar04}. This is essentially due to the fact that not every sequence of nested Farey triangles
$\angles{v^1_1(0),v^1_2(0),v^1_3(0)}\supset
\angles{v^1_1(1),v^1_2(1),v^1_3(1)}\supset
\angles{v^1_1(2),v^1_2(2),v^1_3(2)}\supset\cdots$
intersects in a single point (here, for every $n$, $\angles{v^1_1(n),v^1_2(n),v^1_3(n)}$ is one of the three triangles resulting from the subdivision of $\angles{v^1_1(n-1),v^1_2(n-1),v^1_3(n-1)}$
at stage $n$). In terms of multidimensional continued fractions~\cite{brentjes81} \cite{schweiger00}, this amounts to saying that the continued fraction algorithm naturally associated with the $2$-dimensional Farey partition is not topologically convergent~\cite[Definition~9]{schweiger00}.

In this paper we will construct another generalization of the Minkowski function, by replacing the $2$-dimensional Farey continued fraction algorithm with the $n$-dimensional \monk{} algorithm. The latter algorithm is topologically convergent, and this fact allows us to construct, for every $n\ge2$, an $n$-dimensional Minkowski function $\Phi$ which is a homeomorphism. We will show that appropriate analogs of the properties (1)--(5) continue to hold, with the exception of (2), for which we have partial results.

A remark on terminology is in order here: the multidimensional continued fraction algorithm we are going to use has been rediscovered over and over again. We call it the \newword{\monk\ algorithm} ---and we call \newword{\monk\ map} the associated piecewise-fractional map--- since the first reference we are aware of is~\cite{monkemeyer54}. 
The name \newword{Selmer algorithm} is more widely used; as a matter of fact, the \monk\ algorithm is just the restriction of the Selmer one~\cite{selmer61} to the absorbing simplex~$D$ of~\cite[Theorem~22]{schweiger00}.
In~\cite{baldwin92a} the same algorithm is called the GMA (generalized mediant algorithm), and is defined on a simplex obtainable from $D$ via a permutation of the coordinates. All these versions are easily shown to be equivalent to each other.

\section{An $n$-dimensional Minkowski function}

We will define our generalization $\Phi$ of the Minkowski function as the only homeomorphism of a certain $n$-dimensional simplex $\Delta$ that conjugates the \monk{} map $M$ with a version of the tent map $T$, both maps to be defined shortly. In order to streamline the presentation, we fix some notation. First of all, we fix an integer $n\ge1$, and we identify $\Rbb^n$ with the plane $\pi=\{x_{n+1}=1\}$ in $\Rbb^{n+1}$. If $\pp{v}=(\alpha_1,\ldots,\alpha_n,\alpha_{n+1})\in\Rbb^{n+1}$
and $\alpha_{n+1}>0$, we denote the projection of $\pp{v}$ on $\pi$ by $v=(\alpha_1/\alpha_{n+1},\ldots,\alpha_n/\alpha_{n+1})$. Conversely, if $v\in\Qbb^n$, then we denote by $\pp{v}$ the unique point $\pp{v}=(l_1,\ldots,l_n,l_{n+1})\in\Zbb^{n+1}$ such that $\vect l1{{n+1}}$ are relatively prime, $l_{n+1}>0$, and $\pp{v}$ projects to $v$. In this case, we say that $v$ is a \newword{rational point} and that
the coordinates of $\pp{v}$ are the \newword{primitive projective coordinates} of $v$. Note that this convention differs from the one used in~\cite{schweiger00}, where projective coordinates range from $0$ to $n$, and $\pi=\{x_0=1\}$.

An $n$-dimensional simplex in $\Rbb^n$ is \newword{unimodular} if its vertices $\vect v1{{n+1}}$ are rational and $\vect{\pp{v}}1{{n+1}}$ constitute a $\Zbb$-basis of $\Zbb^{n+1}$. In all this paper, $\Delta$ will denote the simplex whose vertices $\vect v1{{n+1}}$ are given, in primitive projective coordinates, by the columns of the following matrix:
$$
V=\begin{pmatrix}
0 & 1 & 1 & \cdots & 1 & 1 \\
0 & 1 & 1 & \cdots & 1 & 0 \\
0 & 1 & 1 & \cdots & 0 & 0 \\
\vdots & \vdots & \vdots & & \vdots & \vdots \\
0 & 1 & 0 & \cdots & 0 & 0 \\
1 & 1 & 1 & \cdots & 1 & 1
\end{pmatrix}.
$$
More precisely, all entries of $V$ are $0$, except those in position $ij$, with either ($i=n+1$) or ($j\ge2$ and $i+j\le n+2$), that have value $1$. Clearly $\Delta$ is unimodular.

Consider now the following $(n+1)\times(n+1)$ matrices:
$$
A_0=\begin{pmatrix}
1 & 0 & 0 & \cdots & 0 & 1 \\
0 & 0 & 0 & \cdots & 0 & 1 \\
0 & 1 & 0 & \cdots & 0 & 0 \\
\vdots & \vdots & \vdots & & \vdots & \vdots \\
0 & 0 & 0 & \cdots & 0 & 0 \\
0 & 0 & 0 & \cdots & 1 & 0
\end{pmatrix},
\qquad
A_1=\begin{pmatrix}
0 & 0 & 0 & \cdots & 0 & 1 \\
1 & 0 & 0 & \cdots & 0 & 1 \\
0 & 1 & 0 & \cdots & 0 & 0 \\
\vdots & \vdots & \vdots & & \vdots & \vdots \\
0 & 0 & 0 & \cdots & 0 & 0 \\
0 & 0 & 0 & \cdots & 1 & 0
\end{pmatrix}.
$$
Here, all entries of $A_0$ and $A_1$ are $0$, except $(A_0)_{11}$, $(A_1)_{21}$, and all elements in position $1(n+1)$, $2(n+1)$, $(j+1)j$, for $2\le j\le n$, that have value $1$.
Let $\Delta_0,\Delta_1$ be the unimodular simplexes whose vertices are given, in projective coordinates, by the columns of $VA_0$ and $VA_1$, respectively. For $a=0,1$, the matrix $M_a=VA_a^{-1}V^{-1}\in\GL_{n+1}(\Zbb)$ expresses, in projective coordinates, a fractional-linear homeomorphism ---with a slight abuse of notation, again denoted by $M_a$--- from $\Delta_a$ to $\Delta$ as follows: if $x=(x_1\cdots x_n)^{tr}\in\Delta_a$, then the projective coordinates of $M_a(x)$ are $M_a(x_1\cdots x_n\,1)^{tr}$. 
Note that $\Delta=\Delta_0\cup\Delta_1$ and $M_0=M_1$ on $\Delta_0\cap\Delta_1$. Indeed, the $(n-1)$-simplex $\Delta_0\cap\Delta_1$ has vertices given by the columns of $VA'$ (where $A'$ is the $(n+1)\times n$ matrix obtained from either $A_0$ or $A_1$ by removing the first column) and $M_0VA'=M_1VA'$.
We remark here, for future reference in the proof of Proposition~\ref{ref4}, that $M_0[\Delta_0\cap\Delta_1]=M_1[\Delta_0\cap\Delta_1]$ is the $(n-1)$-dimensional face of $\Delta$ whose vertices are $\vect v2{{n+1}}$ (just consider the columns of $VA_0^{-1}A'$).
The continuous piecewise-fractional map $M:\Delta\to\Delta$ defined by $M(x)=M_a(x)$, for $x\in\Delta_a$, is the \newword{\monk{} map}.
A simple matrix computation shows that $\Delta_0=\{x\in\Delta:x_1+x_n\le1\}$ and that, in affine coordinates,
$$
M(x_1,\vect x2n)=\begin{cases}
\displaystyle{\Big(\frac{x_1}{1-x_n},\frac{x_1-x_n}{1-x_n},\ldots,\frac{x_{n-1}-x_n}{1-x_n}\Big)}, & \text{if $x_1+x_n\le1$;} \\
\\
\displaystyle{\Big(\frac{1-x_n}{x_1},\frac{x_1-x_n}{x_1},\ldots,\frac{x_{n-1}-x_n}{x_1}\Big)}, & \text{if $x_1+x_n\ge1$.}
\end{cases}
$$

For $a=0,1$, let now $B_a$ be the $(n+1)\times(n+1)$ matrix which is identical to $A_a$ except for the last column, where the two $1$'s are replaced by $1/2$. The matrices $V$ and $VB_a$ agree in the last row $(1\cdots 1\,1)$. Therefore, the product of the first one with the inverse of the second, i.e., the matrix $T_a=VB_a^{-1}V^{-1}$, has last row $(0\cdots0\,1)$ and determines an affine map $T_a:\Delta_a\to\Delta$ as follows: if $x=(x_1\cdots x_n)^{tr}\in\Delta_a$ and $y=(y_1\cdots y_n)^{tr}=T_a(x)$, then $T_a(x_1\cdots x_n\,1)^{tr}=(y_1\cdots y_n\,1)^{tr}$.
As above, $T_0=T_1$ on $\Delta_0\cap\Delta_1$. 
The continuous piecewise-affine map $T:\Delta\to\Delta$ defined by $T(x)=T_a(x)$, for $x\in\Delta_a$, is the \newword{tent map}.
In affine coordinates, $T$ is expressed by
$$
T(x_1,\vect x2n)=\begin{cases}
(x_1+x_n,x_1-x_n,\ldots,x_{n-1}-x_n), & \text{if $x_1+x_n\le1$;} \\
(2-x_1-x_n,x_1-x_n,\ldots,x_{n-1}-x_n), & \text{if $x_1+x_n\ge1$.}
\end{cases}
$$
Note that, for $n=1$, the \monk{} map and the tent map defined above coincide with the Farey map and the tent map of \S\ref{ref2}.
The following theorem is then an $n$-dimensional generalization of Proposition~\ref{ref3}.

\begin{theorem}
There\label{ref1} exists a unique homeomorphism $\Phi:\Delta\to\Delta$ such that $T=\Phi\circ M\circ\Phi^{-1}$.
\end{theorem}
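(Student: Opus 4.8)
The plan is to produce $\Phi$ as the limit of a canonical combinatorial matching of two refining sequences of triangulations of $\Delta$, and then to recover $\Phi$ uniquely by a rigidity argument. The starting point is that $M$ and $T$ share a Markov structure: for $a\in\{0,1\}$ both $M_a$ and $T_a$ are homeomorphisms $\Delta_a\to\Delta$, they agree on $\Delta_0\cap\Delta_1$, and they send $\Delta_0\cap\Delta_1$ onto the same face of $\Delta$. For a finite word $w=w_1\cdots w_k\in\{0,1\}^k$ I would set $\Delta_w=M_{w_1}^{-1}\cdots M_{w_k}^{-1}[\Delta]$ and $E_w=T_{w_1}^{-1}\cdots T_{w_k}^{-1}[\Delta]$; then $\mathcal P_k=\{\Delta_w:\abs w=k\}$ and $\mathcal Q_k=\{E_w:\abs w=k\}$ are triangulations of $\Delta$ into $2^k$ simplices (unimodular ones on the $\mathcal P$-side), $\mathcal P_{k+1}$ refines $\mathcal P_k$ and $\mathcal Q_{k+1}$ refines $\mathcal Q_k$, and --- using the explicit matrices $A_a$, $B_a$ --- both refinements are the \emph{same} combinatorial move: bisect each simplex along a distinguished edge (the image under $M_w^{-1}$, resp.\ $T_w^{-1}$, of $[v_1,v_2]$), inserting the Farey sum of the endpoints on the $\mathcal P$-side and their midpoint on the $\mathcal Q$-side; note that the two partitions coincide at level $1$ (for $v_1,v_2$ the Farey sum is already the midpoint), so $E_a=\Delta_a$. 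Defining a vertex bijection $\iota$ recursively by $\iota(v_i)=v_i$ and $T_a\circ\iota=\iota\circ M_a$ along the branches, one gets simplicial isomorphisms $\iota_k\colon\mathcal P_k\to\mathcal Q_k$ ($\Delta_w\mapsto E_w$) that are compatible with the refinements and equivariant for the branch maps; write $\mathcal V_k,\mathcal W_k$ for their vertex sets.

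For existence, the substantive input is that $\mesh(\mathcal P_k)\to0$ and $\mesh(\mathcal Q_k)\to0$. Granting this, define, for $x\in\Delta$, $\Phi(x)$ to be the unique point of $\bigcap_k\iota_k\bigl(\sigma_k(x)\bigr)$, where $\sigma_k(x)$ denotes the carrier of $x$ in $\mathcal P_k$ (the face of $\mathcal P_k$ of which $x$ is a relative-interior point). Since $\sigma_{k+1}(x)\subseteq\sigma_k(x)$ and the $\iota_k$ respect refinements, the faces $\iota_k(\sigma_k(x))$ decrease; their diameters are $\le\mesh(\mathcal Q_k)\to0$, so the intersection really is a point and $\Phi$ involves no choices. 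Because both meshes tend to $0$, $\Phi$ is continuous (if $x_m\to x$ then for $k$ fixed and $m$ large $x_m$ and $x$ lie in a common cell of $\mathcal P_k$, so their images lie in a common cell of $\mathcal Q_k$), surjective (compact image containing the dense set $\bigcup_k\mathcal W_k$), and injective (for $x\ne y$ and $k$ large $\sigma_k(x)\cap\sigma_k(y)=\emptyset$, hence the image faces are disjoint); being a continuous bijection of the compact Hausdorff space $\Delta$, it is a homeomorphism, and one reads off $\Phi(v)=\iota(v)$ for $v\in\bigcup_k\mathcal V_k$. For the conjugacy: if $x$ is interior to $\Delta_a$ then $\sigma_k(M(x))=M_a(\sigma_{k+1}(x))$, so by equivariance of $\iota$ and $\Phi(x)\in E_a=\Delta_a$ one gets $\Phi(M(x))=\bigcap_k\iota_k(M_a(\sigma_{k+1}(x)))=\bigcap_kT_a(\iota_{k+1}(\sigma_{k+1}(x)))=T_a(\Phi(x))=T(\Phi(x))$, and density of $\operatorname{int}\Delta_0\cup\operatorname{int}\Delta_1$ plus continuity give $T=\Phi\circ M\circ\Phi^{-1}$ throughout $\Delta$.

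For uniqueness, let $\Psi$ be a homeomorphism of $\Delta$ with $T=\Psi\circ M\circ\Psi^{-1}$. The set where $M$ is not a local homeomorphism is a topological invariant of $M$; inside $\operatorname{int}\Delta$ it equals the relative interior of the wall $\Delta_0\cap\Delta_1$, which is also the wall of $T$. As $\Psi$ preserves $\operatorname{int}\Delta$, it fixes that relative interior setwise, hence permutes the two components $\operatorname{int}\Delta_0$, $\operatorname{int}\Delta_1$ of its complement, hence permutes $\{\Delta_0,\Delta_1\}$. Now $v_1=(0,\dots,0)$ is the unique fixed point of $M$, and of $T$, lying on $\partial\Delta$ (a direct computation, or transport along the $\Phi$ just built, which fixes $v_1$), and $v_1\in\Delta_0\setminus\Delta_1$; since $\Psi(v_1)$ is a fixed point of $T$ on $\partial\Delta=\Psi[\partial\Delta]$, we get $\Psi(v_1)=v_1$, which rules out the transposition and forces $\Psi[\Delta_a]=\Delta_a$ for $a=0,1$. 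On $\Delta_a$ one then has $\Psi\restriction\Delta_a=T_a^{-1}\circ\Psi\circ M_a$ (there $M=M_a$, and as $\Psi[\Delta_a]=E_a$ also $T=T_a$ on the relevant sets), hence $\Psi[\Delta_{aw}]=T_a^{-1}[\Psi[\Delta_w]]$; induction on $\abs w$ yields $\Psi[\Delta_w]=E_w$ for every $w$. Thus $\Psi$ carries $\mathcal P_k$ simplicially onto $\mathcal Q_k$, and having to send Farey sums to midpoints at each subdivision it must coincide with $\iota$ on $\bigcup_k\mathcal V_k$; since $\Phi$ does too and that set is dense, $\Psi=\Phi$.

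The main obstacle is exactly the fact I used as given: $\mesh(\mathcal P_k)\to0$, i.e.\ that a nested sequence of \monk{} simplices shrinks to a single point. This is precisely the topological convergence of the \monk{} (= Selmer) algorithm --- the property that \emph{fails} for the $2$-dimensional Farey algorithm behind the Beaver--Garrity map --- so it cannot be circumvented; I would either invoke it from the literature on the Selmer algorithm or prove it by a combinatorial analysis of which edges get bisected (the companion statement $\mesh(\mathcal Q_k)\to0$ for the midpoint subdivision is of the same flavour, and somewhat easier because the geometry is affine). A secondary routine point that the argument uses tacitly is that each $\mathcal P_k$ and $\mathcal Q_k$ is a genuine simplicial complex --- the recursive bisection is conforming --- so that the $\iota_k$ and the carriers $\sigma_k(x)$ make sense.
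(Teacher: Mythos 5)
Your construction is essentially the paper's: build the two refining simplicial filtrations of $\Delta$ (the paper's $\Fcal_t$ and $\Bcal_t$), match them combinatorially, take the limit once both meshes tend to $0$, and read off the conjugacy from equivariance; the uniqueness argument also lands in the same place (localize the singular wall $\Delta_0\cap\Delta_1$, fix $v_1$, propagate along words). The quotient-of-Cantor-space presentation in the paper is a cosmetic variant of your carrier construction.

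The genuine gap is where you expect it, but it is larger than you indicate. You isolate $\mesh(\mathcal P_k)\to 0$ as the substantive input and are willing to cite M\"onkemeyer/Schweiger for it, which is fine; but you then dismiss $\mesh(\mathcal Q_k)\to 0$ as ``of the same flavour, and somewhat easier because the geometry is affine.'' That is not the case for $n\ge 3$. For $n=2$ the linear parts of $T_a^{-1}$ are $\tfrac{1}{\sqrt2}\times(\text{orthogonal})$, so the simplices shrink by a uniform factor each step and the claim is immediate; the paper itself remarks that this fails for $n\ge3$, where the $T_a^{-1}$ are \emph{not} uniform contractions (they halve volume but can stretch directions). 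Consequently the paper has to work for this: it shows, via Daubechies--Lagarias on the semigroup generated by the column-stochastic matrices $B_0,B_1$ together with a combinatorial ``scrambling'' argument (the two-player game bounding the scrambling length by $\binom{n+1}{2}$), that every infinite product $B_{a_0}B_{a_1}\cdots$ converges to a rank-one matrix, which is exactly $\bigcap_t\Gamma_{\abar\restriction t}$ being a singleton. That is the technical heart of the existence half, and your proposal contains no substitute for it. Everything else you wrote (the conformity of the bisections, the carrier argument, the uniqueness via the fold locus and word induction, where the last step is better finished by noting that once $\Psi[\Delta_w]=E_w$ for all $w$ the shrinking meshes pin down $\Psi(p)=\Phi(p)$ directly, without appealing to ``Farey sums to midpoints'') is sound and matches the paper in substance.
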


The rest of this section is devoted to the proof of Theorem~\ref{ref1}; we first prove the existence of $\Phi$, then its uniqueness.
Recall that a \newword{[rational] simplicial complex} in $\Rbb^n$ is a finite set $\Ccal$ of simplexes in $\Rbb^n$ such that:
(1) all vertices of all simplexes in $\Ccal$ are rational; (2) if $\Gamma\in\Ccal$ and $\Sigma$ is a face of $\Gamma$, then $\Sigma\in\Ccal$; (3) every two simplexes intersect in a common face.
The \newword{support} of $\Ccal$ is the set-theoretic union $\abs{\Ccal}$ of all simplexes in $\Ccal$. A complex $\Ccal$ \newword{refines} a complex $\Dcal$, written $\Ccal\ge\Dcal$, if $\abs{\Ccal}=\abs{\Dcal}$ and every simplex of $\Ccal$ is contained in some simplex of $\Dcal$.
The \newword{mesh} of $\Ccal$, written $\mesh(\Ccal)$ is the maximum diameter of the simplexes in $\Ccal$ or, equivalently~\cite[Corollary~5.18]{hockingyou}, the maximum
length of the $1$-simplexes in $\Ccal$.

The set $\Fcal_1$ of all faces of $\Delta_0$ and $\Delta_1$ is a simplicial complex supported in $\Delta$. For short, we list only the maximal (w.r.t.~the relation of being a face) simplexes, thus writing $\Fcal_1=\{\Delta_0,\Delta_1\}$; we also write $\Fcal_0=\{\Delta\}$.
For every finite sequence $\vect a0{{t-1}}\in\{0,1\}$, we define by recursion
\begin{align*}
\Delta_{a_0\ldots a_{t-1}} &= \Delta_{a_0}\cap M^{-1}\Delta_{a_1\ldots a_{t-1}}\\
&=\{x:x\in\Delta_{a_0} \lland M(x)\in\Delta_{a_1} \lland M^2(x)\in\Delta_{a_2}
\lland \cdots \lland M^{t-1}(x)\in\Delta_{a_{t-1}}\},
\end{align*}
and we call $\Fcal_t=\{\Delta_{a_0\ldots a_{t-1}}:\vect a0{{t-1}}\in\{0,1\}\}$ the \newword{time-$t$ partition} for $M$.

\begin{proposition}
Every\label{ref4} $\Fcal_t$ is a simplicial complex, whose maximal elements are the $2^t$ $n$-simplexes $\Delta_{a_0\ldots a_{t-1}}$. For every $t\ge0$, the complex $\Fcal_{t+1}$ refines $\Fcal_t$.
\end{proposition}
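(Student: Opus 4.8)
The plan is to prove both assertions simultaneously by induction on $t$, the cases $t=0$ and $t=1$ being exactly the observations recorded in the paragraphs preceding the statement. Two elementary facts will drive the induction. First, for $a=0,1$ the matrix $M_a=VA_a^{-1}V^{-1}$ lies in $\GL_{n+1}(\Zbb)$ and acts on primitive projective coordinates; since the resulting fractional-linear map is everywhere finite on the compact simplex $\Delta_a$ (being a homeomorphism onto $\Delta$), it carries the convex hull of any finite set of rational points of $\Delta_a$ onto the convex hull of the images. Consequently $M_a$ transports every rational simplicial complex supported in $\Delta_a$ to a rational simplicial complex supported in $\Delta$, preserving dimension, the face relation, and common faces. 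Second, since $M$ agrees with $M_{a_0}$ on $\Delta_{a_0}$ and $M_{a_0}$ is a bijection of $\Delta_{a_0}$ onto $\Delta$, the recursion rewrites as
$$
\Delta_{a_0a_1\ldots a_{t-1}}=M_{a_0}^{-1}\bigl[\Delta_{a_1\ldots a_{t-1}}\bigr],
$$
the preimage on the right being automatically contained in $\Delta_{a_0}$; a trivial induction from this also gives the nesting $\Delta_{a_0\ldots a_s}\subseteq\Delta_{a_0\ldots a_{s-1}}$ for every finite sequence.

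Assume the statement for $t$. By the first fact, $M_a^{-1}[\Fcal_t]$ is a rational simplicial complex supported in $\Delta_a$ whose maximal simplexes are the $n$-simplexes $M_a^{-1}[\Delta_{b_0\ldots b_{t-1}}]=\Delta_{ab_0\ldots b_{t-1}}$. Because $\Delta=\Delta_0\cup\Delta_1$, the set of simplexes $\Fcal_{t+1}$ is precisely $M_0^{-1}[\Fcal_t]\cup M_1^{-1}[\Fcal_t]$, so $\abs{\Fcal_{t+1}}=\Delta$. The $2^{t+1}$ simplexes $\Delta_{ab_0\ldots b_{t-1}}$ are genuine, pairwise distinct $n$-simplexes: those with $a=0$ lie in $\Delta_0$ and those with $a=1$ in $\Delta_1$, and two of them can coincide only inside the $(n-1)$-dimensional simplex $\Delta_0\cap\Delta_1$; being $n$-dimensional they are clearly the maximal elements of $\Fcal_{t+1}$. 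Everything that matters now reduces to checking that any two simplexes of $\Fcal_{t+1}$ meet in a common face, and the only case needing attention is $\Sigma_0\in M_0^{-1}[\Fcal_t]$ and $\Sigma_1\in M_1^{-1}[\Fcal_t]$.

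This is where I would use the remark, quoted in the text for exactly this purpose, that $M_0$ and $M_1$ coincide on $\Delta_0\cap\Delta_1$ and carry it onto the face $F=\angles{\vect v2{{n+1}}}$ of $\Delta$; hence $M_0^{-1}$ and $M_1^{-1}$ coincide on $F$ and carry it back onto $\Delta_0\cap\Delta_1$. Since $\Fcal_t$ is a simplicial complex whose support is the simplex $\Delta$, the simplexes of $\Fcal_t$ contained in the face $F$ form a subcomplex with support $F$ (a general property of any simplicial subdivision of a simplex). Pulling this subcomplex back by $M_0^{-1}$ and by $M_1^{-1}$ produces, because these two maps agree on $F$, one and the same subcomplex $\Ccal^{*}$ of $M_0^{-1}[\Fcal_t]$ and of $M_1^{-1}[\Fcal_t]$, with support $\Delta_0\cap\Delta_1$. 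Now $\Sigma_i\cap(\Delta_0\cap\Delta_1)=\Sigma_i\cap\abs{\Ccal^{*}}$ is a union of faces of $\Sigma_i$ (intersect $\Sigma_i$ with each simplex of $\Ccal^{*}$ and use the intersection axiom inside $M_i^{-1}[\Fcal_t]$) and is convex, hence is a single face of $\Sigma_i$ and a member of $\Ccal^{*}$. Since $\Sigma_0\cap\Sigma_1\subseteq\Delta_0\cap\Delta_1$, we get
$$
\Sigma_0\cap\Sigma_1=\bigl(\Sigma_0\cap(\Delta_0\cap\Delta_1)\bigr)\cap\bigl(\Sigma_1\cap(\Delta_0\cap\Delta_1)\bigr),
$$
an intersection of two simplexes of the single complex $\Ccal^{*}$, hence a common face of them and a fortiori of $\Sigma_0$ and $\Sigma_1$. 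Thus $\Fcal_{t+1}$ is a simplicial complex.

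Finally, refinement is immediate: $\abs{\Fcal_{t+1}}=\Delta=\abs{\Fcal_t}$, and by the nesting noted above each maximal simplex $\Delta_{a_0a_1\ldots a_t}=M_{a_0}^{-1}[\Delta_{a_1\ldots a_t}]$ of $\Fcal_{t+1}$ is contained in the maximal simplex $\Delta_{a_0a_1\ldots a_{t-1}}=M_{a_0}^{-1}[\Delta_{a_1\ldots a_{t-1}}]$ of $\Fcal_t$; since every simplex of $\Fcal_{t+1}$ is a face of a maximal one, $\Fcal_{t+1}$ refines $\Fcal_t$. This closes the induction. I expect the main obstacle to be the matching argument of the third paragraph: it is precisely the point at which one needs the two fractional-linear branches of $M$ to agree genuinely on $\Delta_0\cap\Delta_1$, the feature that distinguishes the \monk{} algorithm from the $2$-dimensional Farey algorithm and that ultimately makes Theorem~\ref{ref1} work.
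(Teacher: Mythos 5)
Your proof is correct and takes essentially the same route as the paper's: both arguments use the factorization $\Delta_{a_0\ldots a_{t-1}}=M_{a_0}^{-1}[\Delta_{a_1\ldots a_{t-1}}]$ to express $\Fcal_{t+1}$ as the union of the two pulled-back complexes $M_0^{-1}[\Fcal_t]$ and $M_1^{-1}[\Fcal_t]$, and both reduce the intersection axiom to the observation that $M_0$ and $M_1$ agree on $\Delta_0\cap\Delta_1$ and carry it onto the face $\langle v_2,\ldots,v_{n+1}\rangle$, so that the two pulled-back complexes induce the same subdivision of $\Delta_0\cap\Delta_1$. The only difference is one of granularity: the paper stops once it has established that $\psi_0[\Fcal_t]$ and $\psi_1[\Fcal_t]$ agree on $\Delta_0\cap\Delta_1$, treating as known the standard fact that two simplicial complexes on $\Delta_0$ and $\Delta_1$ gluing along a common subcomplex of $\Delta_0\cap\Delta_1$ give a simplicial complex on $\Delta_0\cup\Delta_1$, while you spell this out explicitly via the subcomplex $\Ccal^*$ and the convexity argument showing that $\Sigma_i\cap(\Delta_0\cap\Delta_1)$ is a single face.
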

\begin{proof}
Let $\psi_a=M_a^{-1}:\Delta\to\Delta_a$ be the two inverse branches of $M$, for $a=0,1$. Note that $\Delta_{a_0\ldots a_{t-1}}=\psi_{a_0}\circ\psi_{a_1}\circ\cdots\circ\psi_{a_{t-1}}[\Delta]$. Indeed, this is true for $t=1$, and follows by induction otherwise, since
\begin{align*}
\Delta_{a_0\ldots a_{t-1}} &= \Delta_{a_0}\cap M^{-1}\Delta_{a_1\ldots a_{t-1}}\\
&= \Delta_{a_0}\cap\bigl[\psi_0[\Delta_{a_1\ldots a_{t-1}}]\cup
\psi_1[\Delta_{a_1\ldots a_{t-1}}]\bigr]\\
&= \psi_{a_0}[\Delta_{a_1\ldots a_{t-1}}].
\end{align*}
We now proceed by induction: $\Fcal_0$ and $\Fcal_1$ are simplicial complexes supported on $\Delta$, with $\Fcal_1\ge\Fcal_0$. Assuming that $\Fcal_t$ is such a complex, then the elements of $\Fcal_{t+1}$ are given by $\psi_0[\Fcal_t]\cup\psi_1[\Fcal_t]$, where $\psi_a[\Fcal_t]$ is the set of all $\psi_a$-images of the elements of $\Fcal_t$. Since $\psi_a$ is fractional-linear, $\psi_a[\Fcal_t]$ is a simplicial complex supported in $\Delta_a$. It is therefore sufficient to show that $\psi_0[\Fcal_t]$ and $\psi_1[\Fcal_t]$ agree (i.e., induce the same complex) on the intersection $\Delta_0\cap\Delta_1$. This fact is true 
because, as we remarked in the course of the definition of the \monk{} map, $M_0$ and $M_1$ agree on $\Delta_0\cap\Delta_1$, and provide a fractional-linear homeomorphism between $\Delta_0\cap\Delta_1$ and the $(n-1)$-dimensional face $\Lambda$ of $\Delta$ whose vertices are $\vect v2{{n+1}}$. Therefore $\psi_0$ and $\psi_1$ agree on $\Lambda$. This implies that $\psi_0[\Fcal_t]$ and $\psi_1[\Fcal_t]$ 
induce the same complex on $\Delta_0\cap\Delta_1$, namely the $\psi_0$-image, which is also the $\psi_1$-image, of the complex induced by $\Fcal_t$ on $\Lambda$. The fact that $\Fcal_{t+1}$ refines $\Fcal_t$ is immediate, since every maximal simplex $\Delta_{a_0\ldots a_{t-1}a_t}$ is contained in $\Delta_{a_0\ldots a_{t-1}}$.
\end{proof}

We construct analogously the time-$t$ partition $\Bcal_t$ for $T$. Namely, we let $\Bcal_0=\Fcal_0$, $\Gamma_0=\Delta_0$, $\Gamma_1=\Delta_1$, and $\Gamma_{a_0\ldots a_{t-1}}=\Gamma_{a_0}\cap T^{-1}\Gamma_{a_1\ldots a_{t-1}}$. The analogue of Proposition~\ref{ref4} holds verbatim, and we have simplicial complexes $\Bcal_t=\{\Gamma_{a_0\ldots a_{t-1}}:\vect a0{{t-1}}\in\{0,1\}\}$, with $\Bcal_{t+1}$ refining $\Bcal_t$.
An obvious induction on $t$ shows that there exists a unique combinatorial isomorphism from $\Fcal_t$ to $\Bcal_t$ that fixes the vertices of $\Delta$. At the level of maximal simplexes, the isomorphism is given by $\Delta_{a_0\ldots a_{t-1}}\mapsto
\Gamma_{a_0\ldots a_{t-1}}$.
We draw a picture of $\Fcal_4$ and $\Bcal_4$, for $n=2$, labeling the $2$-simplex $\Gamma_{a_0a_1a_2a_3}\in\Bcal_4$ by $a_0a_1a_2a_3$.
\begin{figure}[H]
\begin{center}
\includegraphics[height=4.5cm,width=4.5cm]{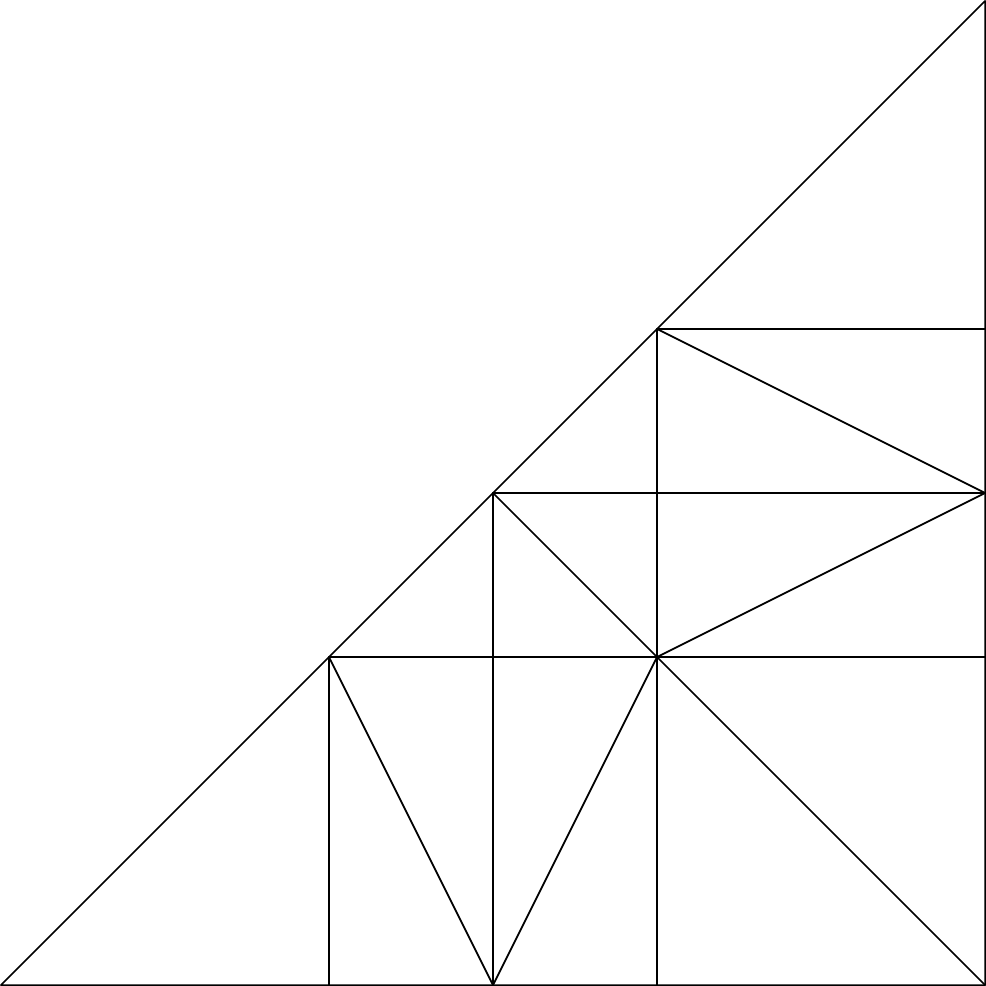}
\qquad
\includegraphics[height=4.5cm,width=4.5cm]{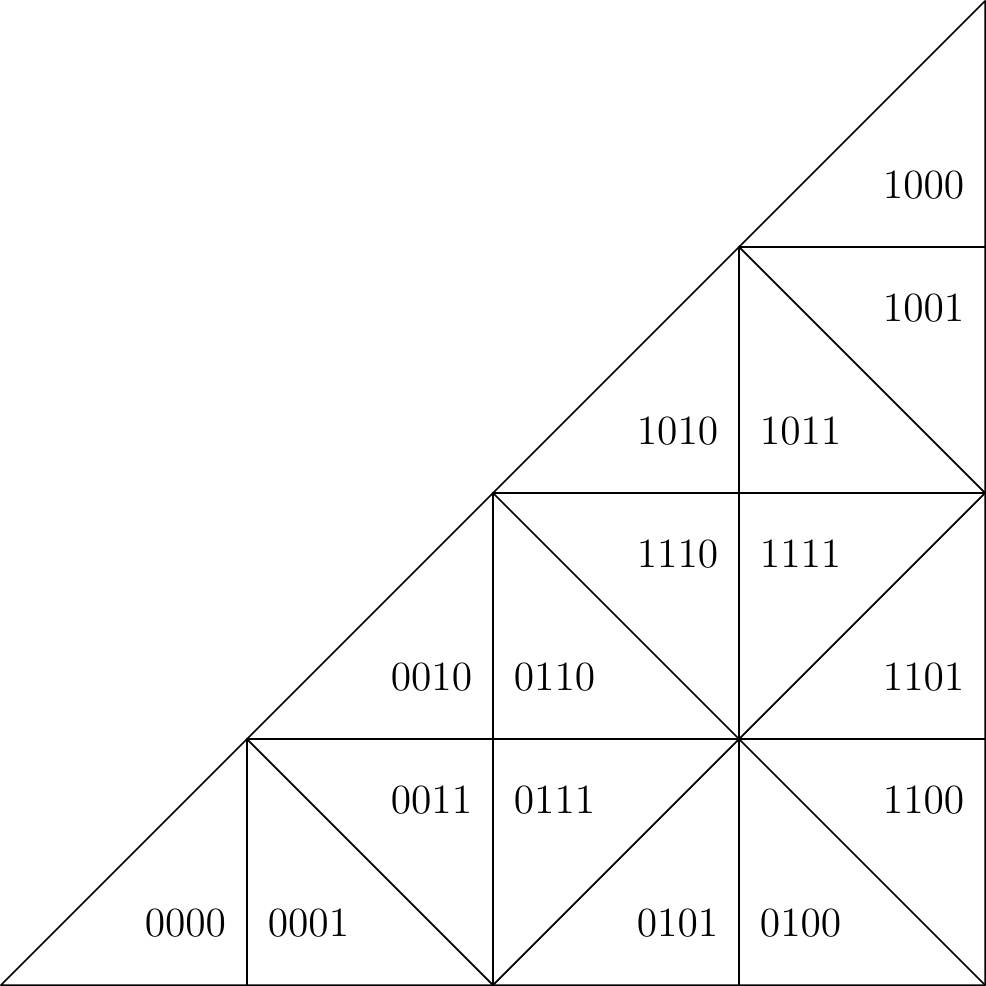}
\end{center}
\end{figure}

Let $\cantor$ be the \newword{Cantor space}, i.e., the set of all infinite sequences $\abar=a_0a_1a_2\ldots$ of elements of $\{0,1\}$, endowed with the product topology. For $t\ge1$, we write $\abar\restriction t$ for $a_0a_1\ldots a_{t-1}$, and we let $[a_0\ldots a_{t-1}]$ be the
\newword{cylinder} $\{\bbar:\abar\restriction t=\bbar\restriction t\}$; we extend this convention by setting $\Delta_{\abar\restriction0}=\Gamma_{\abar\restriction0}=\Delta$ and $[a_0\ldots a_{-1}]=\cantor$.

\begin{lemma}
For\label{ref7} every $\abar\in\cantor$, the intersection $\bigcap_{t\ge0}\Delta_{\abar\restriction t}$ is a singleton, and the intersection $\bigcap_{t\ge0}\Gamma_{\abar\restriction t}$ is a singleton as well.
\end{lemma}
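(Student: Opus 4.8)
The plan is to argue that both families of nested simplexes $\{\Delta_{\abar\restriction t}\}_{t\ge0}$ and $\{\Gamma_{\abar\restriction t}\}_{t\ge0}$ shrink to diameter zero, so that by compactness the intersections are singletons. Since each $\Delta_{\abar\restriction t}$ is a nonempty compact set and $\Delta_{\abar\restriction(t+1)}\subseteq\Delta_{\abar\restriction t}$ by Proposition~\ref{ref4}, the intersection $\bigcap_{t\ge0}\Delta_{\abar\restriction t}$ is automatically nonempty; the content is to show it cannot contain two distinct points, i.e.\ that $\mesh(\Fcal_t)\to0$ (and likewise $\mesh(\Bcal_t)\to0$). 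The barycentric side is the easy half: the inverse branches $T_0^{-1},T_1^{-1}$ of the tent map are affine contractions — one checks from the explicit affine formula for $T$ that each branch has linear part of operator norm strictly less than $1$ (indeed, a short computation gives a uniform bound, so that $\mesh(\Bcal_{t+1})\le c\cdot\mesh(\Bcal_t)$ for some $c<1$), whence $\mesh(\Bcal_t)\to0$ geometrically and $\bigcap_{t\ge0}\Gamma_{\abar\restriction t}$ is a singleton.

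The \monk{} side is where the real work lies, because the inverse branches $\psi_0,\psi_1$ of $M$ are fractional-linear, not affine, and need not be uniform contractions; this is exactly the "topological convergence" issue alluded to for the $2$-dimensional Farey algorithm. The approach I would take is the classical one for continued-fraction algorithms: pass to primitive projective coordinates and control the growth of denominators. Writing a point $x\in\Delta_{\abar\restriction t}$ as $\psi_{a_0}\circ\cdots\circ\psi_{a_{t-1}}$ applied to a point of $\Delta$, the vertices of $\Delta_{\abar\restriction t}$ have primitive projective coordinates given by the columns of a matrix $V A_{a_0}^{-1}A_{a_1}^{-1}\cdots A_{a_{t-1}}^{-1}$ (up to the bookkeeping fixing which columns survive), all of whose entries are nonnegative integers. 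The diameter of $\Delta_{\abar\restriction t}$ in affine coordinates is then the maximum, over pairs of vertices $v,w$ of this simplex, of $\lVert v-w\rVert$, where $v,w$ are obtained by dividing the integer column vectors by their last coordinates. So it suffices to show that for every $\abar$ and every $\varepsilon>0$ there is a $t$ such that all $n+1$ vertices of $\Delta_{\abar\restriction t}$ lie within $\varepsilon$ of each other.

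The heart of the matter is therefore a statement about the \monk\ (= Selmer) algorithm that is already in the literature: the algorithm is topologically convergent, meaning precisely that $\bigcap_t\Delta_{\abar\restriction t}$ is a single point for every $\abar$ (equivalently $\mesh(\Fcal_t)\to0$). I would invoke this — it is essentially \cite[Theorem~22]{schweiger00} together with the convergence results for the Selmer algorithm in \cite{schweiger00} — after translating between the coordinate conventions (ours versus Schweiger's $\{x_0=1\}$ plane) and between the simplex $D$ of \cite{schweiger00} and our $\Delta$ via the explicit unimodular change of basis $V$. Concretely: the two $n$-simplexes with a common $(n-1)$-face that a single step of $M$ carries onto $\Delta$ are $\Delta_0,\Delta_1$; iterating, $\Delta_{\abar\restriction t}$ is exactly the cylinder set of the $t$th level of the algorithm's partition, and Schweiger's convergence theorem says these cylinders have diameters tending to $0$ as $t\to\infty$, uniformly over cylinders of a given address-length is not needed — pointwise in $\abar$ suffices for our claim. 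The main obstacle, and the only place genuine analysis (as opposed to bookkeeping) enters, is precisely this non-uniform convergence on the \monk\ side; everything else is either compactness or the transparent affine estimate for $T$. Once $\mesh(\Fcal_t)\to0$ is in hand, the nested-compact-sets argument closes both cases at once.
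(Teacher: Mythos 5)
Your split of the work into an ``easy'' barycentric half and a ``hard'' \monk{} half is backwards, and the gap is in the barycentric half. The claim that the linear parts of $T_0^{-1},T_1^{-1}$ have operator norm strictly less than $1$ holds for $n=2$ (where, as the paper remarks right after Corollary~\ref{ref9}, they are $1/\sqrt2$ times an orthogonal matrix, so that all triangles in $\Bcal_t$ are congruent), but it already fails at $n=3$. Writing $Q_0$ for the linear part of $T_0$, one has $Q_0^{\,n}=2I$, hence $Q_0^{-1}=\tfrac12 Q_0^{\,n-1}$; for $n=3$ this gives
\[
Q_0^{-1}=\frac12\begin{pmatrix}1&1&0\\1&-1&2\\1&-1&0\end{pmatrix},
\qquad
Q_0^{-1}\bigl(Q_0^{-1}\bigr)^{\mathrm{tr}}=\frac14\begin{pmatrix}2&0&0\\0&6&2\\0&2&2\end{pmatrix},
\]
whose largest eigenvalue is $1+\tfrac{\sqrt2}2\approx1.71$, so $\lVert Q_0^{-1}\rVert_2\approx1.31>1$. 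A single application of an inverse branch can therefore increase the diameter of a simplex, and no constant $c<1$ with $\mesh(\Bcal_{t+1})\le c\cdot\mesh(\Bcal_t)$ exists. The eigenvalues of each $Q_a^{-1}$ do all have modulus $2^{-1/n}<1$, but that controls only iterates of a single branch; for arbitrary words in $Q_0^{-1},Q_1^{-1}$ you would need a joint-spectral-radius bound, and there is no cheap way to obtain one here.

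In fact the barycentric half is where the paper's proof does essentially all its work. Its treatment of the \monk{} half is the same as yours: invoke Mönkemeyer's Satz~10 (or Schweiger's Lemma~19) for topological convergence, with a one-line remark disposing of Mönkemeyer's irrationality hypothesis by convexity. For the barycentric half, the paper observes that the vertices of $\Gamma_{\abar\restriction t}$ are the columns of $V B_{a_0}\cdots B_{a_{t-1}}$ with each $B_a$ column-stochastic, so that $\bigcap_t\Gamma_{\abar\restriction t}$ is a singleton precisely when $\lim_{t\to\infty} B_{a_0}\cdots B_{a_{t-1}}$ exists and has all columns equal. This is established by appealing to a theorem of Daubechies and Lagarias on infinite products of column-stochastic matrices, which reduces matters to showing that every product of $s=(n+1)n/2$ matrices from $\{B_0,B_1\}$ is scrambling in Hajnal's sense; that is in turn proved by a combinatorial pursuit argument (the ``Lovers game'') on the common incidence graph of $B_0$ and $B_1$. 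Your proposal supplies no substitute for this combinatorial core, which is the genuine content of the lemma; what you flag as ``where the real work lies'' is the part the paper handles by citation.
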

\begin{proof}
The first statement amounts to saying that the \monk{} algorithm is topologically convergent~\cite[Definition~9]{schweiger00}: this fact is proved in~\cite[Satz~10]{monkemeyer54} as well as in~\cite[Lemma~19]{schweiger00}. Note that~\cite[Satz~10]{monkemeyer54} assumes that $\bigcap_{t\ge0}\Delta_{\abar\restriction t}$ contains a point whose coordinates are not all rational. But this is not a restriction since, if $\bigcap_{t\ge0}\Delta_{\abar\restriction t}$ contained two distinct points $p,q$ then, by convexity, it would contain all the points in the line segment connecting $p$ with $q$, and hence a point whose coordinates are not all rational.

In order to prove the second statement note that the vertices of $\Gamma_{\abar\restriction t}$ are given, in projective coordinates, by the columns of $VB_{a_0}\cdots B_{a_{t-1}}$. Let $\stochastic$ be the set of all $(n+1)\times(n+1)$ column-stochastic matrices (i.e., all nonnegative matrices having the property that the entries in each column sum up to $1$). Observe that $\stochastic$ is a compact submonoid of $(\Mat_{(n+1)\times(n+1)}\Rbb,\cdot,\Id)$. Let $B_{\abar\restriction t}=B_{a_0}\cdots B_{a_{t-1}}\in\stochastic$. 
We will apply~\cite[Theorem~6.1]{daubechieslag} to show that, for every $\abar\in\cantor$, the limit $\Btilde=\lim_{t\to\infty}B_{\abar\restriction t}$ exists (necessarily in $\stochastic$, since the latter is closed), and all columns of $\Btilde$ are equal. 
Recall that a column-stochastic matrix $C=C_{ij}$ is \newword{$(j_1,j_2)$-scrambling} if there exists a row index $i$ such that
$C_{ij_1}$ and $C_{ij_2}$ are both $>0$; $C$ is \newword{scrambling} if it is $(j_1,j_2)$-scrambling for every pair $(j_1,j_2)$ of columns indices~\cite{hajnal58}. By~\cite[Theorem~6.1]{daubechieslag}, it will be sufficient to prove the following:
\begin{itemize}
\item[(A)] there exists $s>0$ such that all products of $s$ matrices from $\{B_0,B_1\}$ (repetitions allowed) are scrambling.
\end{itemize}
It is simpler to argue on the incidence graphs $G(B_0)$ and $G(B_1)$ associated to $B_0$ and $B_1$. The graph $G(B_0)$ has $n+1$ vertices and there is a directed edge connecting the $j$th vertex to the $i$th iff $(B_0)_{ij}>0$; similarly for $G(B_1)$. We combine $G(B_0)$ and $G(B_1)$ in a single graph $G$ as in the following picture, with the understanding that in $G(B_0)$ the edge $e_0$ is activated and the edge $e_1$ is discarded, and conversely in $G(B_1)$.
\begin{figure}[H]
\begin{center}
\includegraphics[width=7cm]{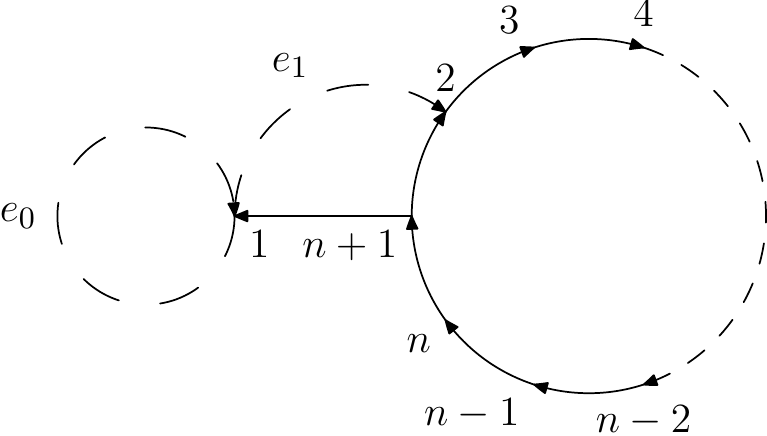}
\end{center}
\end{figure}
We will deduce property (A) from the existence of a winning strategy for a certain game on $G$. The game starts with two Lovers sitting in distinct vertices. A move of the game consists of the following: first, the Enemy chooses which of the two edges $e_0$ and $e_1$ is to be active at that move, and then each Lover moves one step along an edge departing from his/her current vertex. The Lovers win the game if after finitely many moves they end up in the same vertex.

\noindent\emph{Claim.} Regardless of the initial position, the Lovers win in at most $(n+1)n/2$ moves.

Assuming the Claim, let us prove (A). We take $s=(n+1)n/2$, and we fix a product $B=B_{a_0}\cdots B_{a_{s-1}}$ of $s$ matrices from $\{B_0,B_1\}$. No column in $B_0$ or in $B_1$ is identically $0$; therefore, for every $(j_1,j_2)$-scrambling matrix $C$, both $B_0C$ and $B_1C$ are $(j_1,j_2)$-scrambling. Choose column indices $j_1,j_2$; by the above, we can assume $j_1\not=j_2$. Consider the game in which the Lovers apply the winning strategy and start in position $j_1$ and $j_2$, while the Enemy activates the edge $e_{a_{s-r}}$ at the $r$th move ($r\ge1$). By the Claim, this game ends after $1\le t\le s$ moves, leaving the Lovers in the same vertex $i$. This implies that there exists a path in $G$ connecting $j_1$ to $i$, and such that the $r$th edge in the path is an edge of $G(B_{a_{s-r}})$. By the elementary properties of the incidence graphs of nonnegative square matrices, 
the $ij_1$th entry of $B_{a_{s-t}}\cdots B_{a_{s-2}}B_{a_{s-1}}$ is $>0$. Analogously, the $ij_2$th entry is $>0$; hence $B_{a_{s-t}}\cdots B_{a_{s-2}}B_{a_{s-1}}$ is $(j_1,j_2)$-scrambling, and so is $B$. Since $j_1$ and $j_2$ are arbitrary, $B$ is scrambling.

\noindent\emph{Proof of Claim.} Given any vertex $p$ of $G$, there exists a unique vertex path $p=p_0,p_1,p_2,\ldots$ such that, for every $i\ge1$, there is an oriented edge in $G$, different from both $e_0$ and $e_1$, which connects $p_i$ to $p_{i-1}$. Let us call such a path a \newword{backward path}. The length of a finite backward path 
$p_0,p_1,p_2,\ldots,p_r$ is $r$, its origin is $p_0$, and its endpoint is $p_r$. Let $\Vcal$ be the set, of cardinality $(n+1)n/2$, whose elements are all unordered pairs of distinct vertices of $G$. If $\{p,q\}\in\Vcal$, then the \newword{gap} $g(p,q)$ of $\{p,q\}$ is the minimal length of a backward path whose origin is one of the vertices $p,q$, and whose endpoint is the other vertex. The origin of such a path is the \newword{leading vertex} of $\{p,q\}$, and the \newword{defect} $d(p,q)$ of $\{p,q\}$ is $0$ if the leading vertex is $1$, and is $g(1,\text{leading vertex})$ otherwise. The leading vertex, and the numbers $1\le g(p,q)\le n$ and $0\le d(p,q)\le n$, are uniquely determined by the pair, with the exception of the case in which $n$ is even, $p$ and $q$ are both $\not=1$, and $g(p,q)=n/2$. In this case, we define the leading vertex to be the vertex whose gap from $1$ is minimal, and we define the defect accordingly. Let $\Tcal$ be the set $\{1,\ldots,n\}\times\{0,\ldots,n\}$, ordered lexicographically: $(g,d)\prec(g',d')$ iff $g<g'$ or ($g=g'$ and $d<d'$). The map $\chi:\Vcal\to\Tcal$ defined by $\chi\{p,q\}=\bigl(g(p,q),d(p,q)\bigr)$ is injective: indeed $d(p,q)$ determines uniquely the leading vertex of the pair (start from $1$ and go backwards $d(p,q)$ steps in $G$, never using the edges $e_0$ and $e_1$), and then $g(p,q)$ determines the other vertex.

Assume now that at a certain stage of the game the Lovers are in distinct vertices $p,q$, and consider the following strategy.
\begin{itemize}
\item[(a)] If $\{p,q\}=\{1,n+1\}$, then the Lovers win at the next move, either by meeting at $1$ (if the Enemy chooses to activate $e_0$) or by meeting at $2$ (if the Enemy activates $e_1$).
\item[(b)] Otherwise, if $n+1\notin\{p,q\}$, then each Lover follows the unique edge at his disposal.
\item[(b)] Otherwise, without loss of generality $p=n+1$ and $q\notin\{1,n+1\}$. Then the Lover at $q$ follows the unique edge at his disposal, while the Lover at $p$ moves to $1$ provided that $p$ is the leading vertex of the pair $\{p,q\}$, otherwise moves to $2$.
\end{itemize}
Let $p',q'$ be the vertices occupied by the Lovers at the next step, and assume that the game is not finished yet (hence case~(a) did not apply, and $p'\not=q'$).
An easy case analysis, distinguishing the three cases (i) the leading vertex is $1$, (ii) the leading vertex is $n+1$, and (iii) the leading vertex is in $\{2,\ldots,n\}$, shows that $\chi(p',q')\prec\chi(p,q)$. In other words, at each step either the gap of the pair decreases, or the gap stays the same and the defect decreases. Since $\chi[\Vcal]$ has finite size and is totally ordered by $\prec$, the length of the longest possible game, final winning move included, coincides with this size, namely $(n+1)n/2$.
\end{proof}

Note that the bound in the proof of Lemma~\ref{ref7} is sharp: for $n=2$ the matrix $A_0A_1$ is not scrambling, and for $n=3$ the matrix $A_0A_0A_0A_1A_0$ is not scrambling either.

\begin{corollary}
Both\label{ref9} $\lim_{t\to\infty}\mesh(\Fcal_t)$ and 
$\lim_{t\to\infty}\mesh(\Bcal_t)$ exist and have value~$0$.
\end{corollary}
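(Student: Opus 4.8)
The plan is to combine the monotonicity of the mesh with Lemma~\ref{ref7} and a compactness argument on $\cantor$; the geometry of $M$ and $T$ will play no further role beyond what is already encoded in Lemma~\ref{ref7}. I will treat $\Fcal_t$ in detail, the argument for $\Bcal_t$ being word-for-word the same with $\Delta_{\abar\restriction t}$ replaced by $\Gamma_{\abar\restriction t}$.

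First I would record two easy reductions. Since $\Fcal_{t+1}$ refines $\Fcal_t$ (Proposition~\ref{ref4}), every simplex of $\Fcal_{t+1}$ lies inside a simplex of $\Fcal_t$, so $t\mapsto\mesh(\Fcal_t)$ is non-increasing; being bounded below by $0$, its limit exists, and it remains only to show the limit is $0$. Next, since the maximal simplexes of $\Fcal_t$ are exactly the $\Delta_{a_0\ldots a_{t-1}}$ and every simplex is a face of --- hence contained in --- a maximal one, we have $\mesh(\Fcal_t)=\max\{\operatorname{diam}\Delta_{\abar\restriction t}:\abar\in\cantor\}$.

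Now I argue by contradiction. If $\mesh(\Fcal_t)\not\to0$ then, by monotonicity, $\mesh(\Fcal_t)\ge\varepsilon$ for all $t$ and some $\varepsilon>0$. The set $S=\{w\in\{0,1\}^{<\omega}:\operatorname{diam}\Delta_w\ge\varepsilon\}$ is closed under prefixes (because $w$ a prefix of $w'$ gives $\Delta_{w'}\subseteq\Delta_w$) and, by the previous paragraph, contains a word of each length; hence it is an infinite subtree of the binary tree, and König's lemma yields an infinite branch $\abar\in\cantor$ with $\operatorname{diam}\Delta_{\abar\restriction t}\ge\varepsilon$ for every $t$. On the other hand, the $\Delta_{\abar\restriction t}$ form a decreasing chain of nonempty compact subsets of $\Rbb^n$ whose intersection, by Lemma~\ref{ref7}, is a single point $p$. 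Fixing $\eta$ with $0<2\eta<\varepsilon$ and letting $U$ be the open $\eta$-ball about $p$, the sets $\Delta_{\abar\restriction t}\setminus U$ are compact and decreasing with empty intersection, so one of them --- say the $t_0$-th --- is empty; then $\Delta_{\abar\restriction t_0}\subseteq U$ has diameter $\le2\eta<\varepsilon$, contradicting the choice of $\abar$. Hence $\mesh(\Fcal_t)\to0$, and likewise $\mesh(\Bcal_t)\to0$.

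I do not anticipate a genuine obstacle here: the one substantial ingredient, namely that the nested simplexes $\Delta_{\abar\restriction t}$ and $\Gamma_{\abar\restriction t}$ shrink to points, is precisely the content of Lemma~\ref{ref7}, and everything else is a routine application of König's lemma and the finite-intersection property for nested compact sets. (An equivalent route, which bypasses König's lemma, is to note that the functions $\abar\mapsto\operatorname{diam}\Delta_{\abar\restriction t}$ are locally constant on $\cantor$, decrease pointwise to $0$ by Lemma~\ref{ref7}, and therefore converge uniformly on the compact space $\cantor$ by Dini's theorem; the supremum of the $t$-th such function is $\mesh(\Fcal_t)$.)
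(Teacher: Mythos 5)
Your proof is correct and follows essentially the same route as the paper: assume a uniform diameter lower bound $\varepsilon$, observe that the offending words form a prefix-closed infinite subtree of the binary tree, extract an infinite branch by K\"onig's lemma, and derive a contradiction from Lemma~\ref{ref7}. You merely spell out the last compactness step and the monotonicity observation, which the paper leaves implicit, and you add the (equivalent) Dini's-theorem route as a remark.
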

\begin{proof}
Suppose that statement is false for, say, the Farey complexes. Then there exists $\varepsilon>0$ such that, for every $t$, the set
$$
\Scal_t=\{a_0a_1\ldots a_{t-1}:\text{the diameter of $\Delta_{a_0\ldots a_{t-1}}$ is $\ge\varepsilon$}\}
$$
is not empty. If $a_0a_1\ldots a_{t-1}\in\Scal_t$ and $0<k\le t$, then $a_0a_1\ldots a_{k-1}\in\Scal_k$, so the union of all $\Scal_t$'s is an infinite subtree of the full binary tree. By K\"onig's Lemma~\cite[Lemma~3.3.19]{vandalen94} this subtree has an infinite branch $\abar\in\cantor$. This contradicts Lemma~\ref{ref7}.
\end{proof}

As a side remark note that, for $n=2$, all the $2^t$ triangles in $\Bcal_t$ are congruent, because the $2\times 2$ matrices obtained from $T_0^{-1}$ and $T_1^{-1}$ by removing the third row and the third column are both of the form $1/\sqrt{2}\cdot(\text{an orthogonal matrix})$. This is no longer true for $n\ge3$.

Given $\abar\in\cantor$, let $\varphi(\abar)$ be the unique point in $\bigcap_{t\ge0}\Delta_{\abar\restriction t}$, and let $\upsilon(\abar)$ be the unique point in 
$\bigcap_{t\ge0}\Gamma_{\abar\restriction t}$.

\begin{lemma}
The\label{ref6} mappings $\varphi,\upsilon:\cantor\to\Delta$ are continuous, surjective, and have the same fibers (i.e., $\varphi(\abar)=\varphi(\bbar)$ iff $\upsilon(\abar)=\upsilon(\bbar)$).
\end{lemma}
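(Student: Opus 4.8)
The plan is to dispatch the three assertions in order: continuity and surjectivity are quick consequences of the vanishing of the mesh (Corollary~\ref{ref9}), while the equality of fibers rests on the combinatorial isomorphisms $\Fcal_t\cong\Bcal_t$. For continuity of $\varphi$ I would fix $\abar\in\cantor$ and $\varepsilon>0$, use Corollary~\ref{ref9} to choose $t$ with $\mesh(\Fcal_t)<\varepsilon$, and note that every $\bbar$ in the cylinder $[a_0\ldots a_{t-1}]$ satisfies $\varphi(\abar),\varphi(\bbar)\in\Delta_{\abar\restriction t}$, a single simplex of $\Fcal_t$ of diameter below $\varepsilon$; since cylinders form a basis of $\cantor$, $\varphi$ is continuous, and likewise $\upsilon$ with $\Bcal_t$ replacing $\Fcal_t$. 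For surjectivity, $\varphi[\cantor]$ is compact, hence closed in $\Delta$, so it is enough to see it is dense: given $x\in\Delta$ and $\varepsilon>0$, pick $t$ with $\mesh(\Fcal_t)<\varepsilon$ and a maximal simplex $\Delta_{a_0\ldots a_{t-1}}\ni x$ (possible since $\abs{\Fcal_t}=\Delta$), whence $\varphi(\abar)$ lies within $\varepsilon$ of $x$ for every $\abar$ extending $a_0\ldots a_{t-1}$. (One may instead observe that $\{w:x\in\Delta_w\}$ is an infinite binary subtree and apply K\"onig's Lemma.) The same argument applies to $\upsilon$.

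For the equality of fibers, the construction of the complexes $\Bcal_t$, the simplexes $\Gamma_{a_0\ldots a_{t-1}}$, and the map $\upsilon$ from the tent map runs exactly parallel to that of the $\Fcal_t$, the $\Delta_{a_0\ldots a_{t-1}}$, and $\varphi$ from the \monk{} map, and the combinatorial isomorphism $h_t\colon\Fcal_t\to\Bcal_t$ of the text has an inverse of the same kind; so it suffices to prove one implication, say $\varphi(\abar)=\varphi(\bbar)\Rightarrow\upsilon(\abar)=\upsilon(\bbar)$. Assume $\varphi(\abar)=\varphi(\bbar)$. For each $t$ this common point lies in $\Delta_{\abar\restriction t}\cap\Delta_{\bbar\restriction t}$, so these two simplexes of $\Fcal_t$ meet in a nonempty common face and in particular share a vertex $w_t$. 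As $h_t$ is a simplicial isomorphism with $h_t(\Delta_{\abar\restriction t})=\Gamma_{\abar\restriction t}$ and $h_t(\Delta_{\bbar\restriction t})=\Gamma_{\bbar\restriction t}$, the vertex $h_t(w_t)$ lies in both $\Gamma_{\abar\restriction t}$ and $\Gamma_{\bbar\restriction t}$, so $\Gamma_{\abar\restriction t}\cap\Gamma_{\bbar\restriction t}\ne\emptyset$. The analogue of Proposition~\ref{ref4} for the tent map gives $\Gamma_{\abar\restriction(t+1)}\subseteq\Gamma_{\abar\restriction t}$ and $\Gamma_{\bbar\restriction(t+1)}\subseteq\Gamma_{\bbar\restriction t}$, so the sets $\Gamma_{\abar\restriction t}\cap\Gamma_{\bbar\restriction t}$ form a decreasing chain of nonempty closed subsets of the compact set $\Delta$ and therefore have nonempty intersection. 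That intersection equals $\bigl(\bigcap_t\Gamma_{\abar\restriction t}\bigr)\cap\bigl(\bigcap_t\Gamma_{\bbar\restriction t}\bigr)=\{\upsilon(\abar)\}\cap\{\upsilon(\bbar)\}$ by Lemma~\ref{ref7} and the definition of $\upsilon$, whence $\upsilon(\abar)=\upsilon(\bbar)$; interchanging $\Fcal$ with $\Bcal$ (and $h_t$ with $h_t^{-1}$) yields the converse.

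I expect no serious obstacle, but the one point to watch is that the isomorphisms $h_t$ are purely combinatorial and are not mutually compatible as $t$ grows, so there is no single piecewise-linear homeomorphism of $\Delta$ inducing all of them; the argument above uses each $h_t$ only in isolation, to transport the nonemptiness of $\Delta_{\abar\restriction t}\cap\Delta_{\bbar\restriction t}$ to the $\Gamma$ side, and then relies on the genuine geometric nesting $\Gamma_{\abar\restriction(t+1)}\subseteq\Gamma_{\abar\restriction t}$ for the compactness step.
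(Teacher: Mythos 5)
Your proof is correct and takes essentially the same route as the paper's. The fiber argument is identical (common face from the combinatorial isomorphism, then compactness of the nested intersection), and your continuity/surjectivity arguments just substitute the mesh statement (Corollary~\ref{ref9}) for the more primitive topological-convergence fact (Lemma~\ref{ref7}) that the paper invokes directly; since the former is derived from the latter, this is a cosmetic variation rather than a different approach.
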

\begin{proof}
Clearly $\varphi$ is surjective and we have
\begin{align*}
\Delta_{a_0\ldots a_{t-1}}&\subseteq\varphi\bigl[[a_0\ldots a_{t-1}]\bigr];\tag{$*$} \\
[a_0\ldots a_{t-1}]&\subseteq\varphi^{-1}\Delta_{a_0\ldots a_{t-1}}\tag{$**$}.
\end{align*}
If $U\subseteq\Delta$ is open, then we have
\begin{equation*}
\varphi^{-1}U=\bigcup\{[a_0\ldots a_{t-1}]:\Delta_{a_0\ldots a_{t-1}}\subseteq U\}\tag{$*{*}*$}.
\end{equation*}
Indeed, the $\supseteq$ inclusion is immediate from~($**$). On the other hand, let $\varphi(\abar)\in U$. Then $(\Delta\setminus U)\cap\bigcap_{t\ge0}\Delta_{\abar\restriction t}=\emptyset$, and hence by compactness there exists $t\ge0$ such that $\Delta_{\abar\restriction t}\subseteq U$. Therefore $\abar$ belongs to the right-hand side of~($*{*}*$), and equality follows. Since the right-hand side of~($*{*}*$) is open in the Cantor space, $\varphi$ is continuous.
Exactly the same proof shows that $\upsilon$ is surjective and continuous as well.

We assume now that $\abar,\bbar$ are such that $\varphi(\abar)=\varphi(\bbar)$, and prove $\upsilon(\abar)=\upsilon(\bbar)$. By hypothesis, for every $t\ge0$ we have $\Delta_{\abar\restriction t}\cap\Delta_{\bbar\restriction t}\not=\emptyset$, and hence $\Delta_{\abar\restriction t}$ and $\Delta_{\bbar\restriction t}$ intersect in a common nonempty face. As observed above, $\Fcal_t$ and $\Bcal_t$ are combinatorially isomorphic; therefore, for every $t$, $\Gamma_{\abar\restriction t}$ and $\Gamma_{\bbar\restriction t}$ intersect in a common nonempty face as well. Again by compactness, $\bigcap_{t\ge0}(\Gamma_{\abar\restriction t}\cap\Gamma_{\bbar\restriction t})\not=\emptyset$. Since by definition $\bigcap_{t\ge0}\Gamma_{\abar\restriction t}=\{\upsilon(\abar)\}$ and
$\bigcap_{t\ge0}\Gamma_{\bbar\restriction t}=\{\upsilon(\bbar)\}$,
we have $\upsilon(\abar)=\upsilon(\bbar)$. Clearly the r\^ole of $\varphi$ and $\upsilon$ can be reversed, and it follows that $\varphi$ and $\upsilon$ have the same fibers.
\end{proof}

Let $\equiv$ be the equivalence relation on the Cantor space defined by $\abar\equiv\bbar$ iff $\varphi(\abar)=\varphi(\bbar)$ iff $\upsilon(\abar)=\upsilon(\bbar)$. Let $\chi:\cantor\to\cantor/\equiv$ be the quotient mapping, and endow $\cantor/\equiv$ with the quotient topology: $V$ is open in $\cantor/\equiv$ iff $\chi^{-1}V$ is open in $\cantor$. We have an obvious factorization in continuous mappings
$$
\begin{xy}
\xymatrix{
& \cantor \ar[dl]_\varphi \ar[d]^\chi \ar[dr]^\upsilon  & \\
\Delta & \cantor/\equiv \ar[l]^{\bar\varphi} \ar[r]_{\bar\upsilon} & \Delta
}
\end{xy}
$$
The quotient space $\cantor/\equiv$ is compact, and $\Delta$ is Hausdorff. Hence the continuous bijections $\bar\varphi$ and $\bar\upsilon$ are both homeomorphisms.

\begin{definition}
We define $\Phi:\Delta\to\Delta$ as the homeomorphism $\Phi={\bar\upsilon}\circ{\bar\varphi}^{-1}$. Equivalently, $\Phi(p)=\upsilon(\abar)$, for any $\abar$ such that $\varphi(\abar)=p$.
\end{definition}

For every $\vect a0{{t-1}}$, the homeomorphism $\Phi$ restricts to a bijection from $\Delta_{a_0\ldots a_{t-1}}$ to $\Gamma_{a_0\ldots a_{t-1}}$; this follows from~($*$) and~($**$) in the proof of Lemma~\ref{ref6} and the corresponding inclusions for~$\upsilon$.
For every $t>1$ we have $M[\Delta_{a_0\ldots a_{t-1}}]=M\circ\psi_{a_0}
[\Delta_{a_1\ldots a_{t-1}}]=\Delta_{a_1\ldots a_{t-1}}$; this makes sense also for $t=1$, since $M[\Delta_{a_0}]=\Delta$. Analogously we have $T[\Gamma_{a_0\ldots a_{t-1}}]=\Gamma_{a_1\ldots a_{t-1}}$. Denoting by
$S$ the shift map on $\cantor$ (i.e., $S(a_0a_1a_2\ldots)=(a_1a_2a_3\ldots)$), it easily follows that the following diagram commutes:
$$
\begin{xy}
\xymatrix{
\Delta \ar@/_2pc/[dd]_\Phi \ar[r]^M  & \Delta \ar@/^2pc/[dd]^\Phi \\
\cantor \ar[u]^\varphi \ar[d]_\upsilon \ar[r]^S & \cantor \ar[u]_\varphi \ar[d]^\upsilon \\
\Delta \ar[r]_T & \Delta
}
\end{xy}
$$
By chasing the diagram, one sees immediately that $T=\Phi\circ M\circ\Phi^{-1}$, as required.

We now show the uniqueness of $\Phi$, by assuming that $\Psi$ is a homeomorphism of $\Delta$ such that $T=\Psi\circ M\circ\Psi^{-1}$ and proving that $\Psi=\Phi$. Observe that the boundary $\partial\Delta$ of $\Delta$ is characterized ---in purely topological terms, with no reference to the immersion of $\Delta$ in $\Rbb^n$--- as the set of points $p\in\Delta$ whose removal leaves $\Delta\setminus\{p\}$ contractible. Therefore, $\Phi[\partial\Delta]=\Psi[\partial\Delta]=\partial\Delta$. Let $\Sigma$ be the set-theoretic union of the proper faces of $\Delta_0$ and $\Delta_1$; we have $\Sigma=M^{-1}\partial\Delta=T^{-1}\partial\Delta$ and, as a consequence, $\Sigma=\Phi^{-1}\Sigma=\Psi^{-1}\Sigma$. Indeed, e.g., $p\in\Sigma$ iff $M(p)\in\partial\Delta$ iff $\Psi\circ M(p)\in\partial\Delta$ iff
$T\circ\Psi(p)\in\partial\Delta$ iff $\Psi(p)\in T^{-1}\partial\Delta=\Sigma$ iff $p\in\Psi^{-1}\Sigma$.

Note that $\Phi(v_1)=\Psi(v_1)=v_1$. Indeed, $v_1$ is a point in $\partial\Delta$ which is fixed by $M$, and therefore both $\Phi(v_1)$ and $\Psi(v_1)$ must be points in $\partial\Delta$ which are fixed by~$T$. But there is only one such point, namely $v_1$ itself. Observe now that $\Delta_0$ can be characterized as the set of points in $\Delta$ that can be connected to $v_1$ by a continuous path whose relative interior does not intersect $\Sigma$. Since $\Phi$ and $\Psi$ are homeomorphisms, both fixing $\Sigma$ globally, we can safely conclude that $\Phi[\Delta_a]=\Psi[\Delta_a]$, for $a=0,1$.

Let now $p$ be any point of $\Delta$, and choose $\abar$ such that $\varphi(\abar)=p$ and $\upsilon(\abar)=\Phi(p)$. For every $t\ge0$ we have $M^t(p)\in\Delta_{a_t}=\Gamma_{a_t}$, and therefore $\Psi^{-1}\circ T^t\bigl(\Psi(p)\bigr)\in\Gamma_{a_t}$, i.e., $T^t\bigl(\Psi(p)\bigr)\in\Psi[\Gamma_{a_t}]=\Gamma_{a_t}$. Hence, by definition of $\upsilon$, $\Psi(p)=\upsilon(\abar)=\Phi(p)$. This concludes the proof of Theorem~\ref{ref1}.

\section{Fractal structure, periodicity and singularity}

In this section we will discuss how properties \S1(1)--(5) of the classical Minkowski function generalize to our $n$-dimensional setting. Basically, all properties continue to hold, with the exception of \S1(2), whose full validity turns out to be an open problem. Let us first treat \S1(4).

\begin{proposition}
Let\label{ref10} $t\ge0$, let $\Xi\in\Fcal_t$, and let $\Lambda$ be the simplex in $\Bcal_t$ corresponding to $\Xi$ under the combinatorial isomorphism defined before Lemma~\ref{ref7}. Then $\Phi$ restricts to a homeomorphism between $\Xi$ and $\Lambda$.
Moreover, for every $\Delta_{a_0\ldots a_{t-1}}\in\Fcal_t$ we have
$$
\Phi=(T_{a_{t-1}}\circ\cdots\circ T_{a_0})\circ(\Phi\restriction\Delta_{a_0\ldots a_{t-1}})\circ(M_{a_{t-1}}\circ\cdots\circ M_{a_0})^{-1}.
$$
\end{proposition}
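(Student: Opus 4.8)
The plan is to establish the displayed identity first, as it powers the rest. If $x\in\Delta_{a_0\ldots a_{t-1}}$ then $M^j(x)\in\Delta_{a_j}$ for $0\le j<t$, directly from the definition of $\Delta_{a_0\ldots a_{t-1}}$, so $M^t\restriction\Delta_{a_0\ldots a_{t-1}}=M_{a_{t-1}}\circ\cdots\circ M_{a_0}$; this composite is the inverse of $\psi_{a_0}\circ\cdots\circ\psi_{a_{t-1}}$, hence a homeomorphism of $\Delta_{a_0\ldots a_{t-1}}$ onto $\Delta$. Symmetrically, $T^t\restriction\Gamma_{a_0\ldots a_{t-1}}=T_{a_{t-1}}\circ\cdots\circ T_{a_0}$ is a homeomorphism onto $\Delta$. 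Iterating the conjugacy $T=\Phi\circ M\circ\Phi^{-1}$ gives $\Phi\circ M^t=T^t\circ\Phi$ on all of $\Delta$; I would restrict the source to $\Delta_{a_0\ldots a_{t-1}}$ (using the already-known $\Phi[\Delta_{a_0\ldots a_{t-1}}]=\Gamma_{a_0\ldots a_{t-1}}$) and substitute $y=(M_{a_{t-1}}\circ\cdots\circ M_{a_0})(x)$, which sweeps over all of $\Delta$ as $x$ sweeps over $\Delta_{a_0\ldots a_{t-1}}$; reading off the resulting equality gives the asserted formula. The same manipulation at $t=1$ (equivalently, $\Phi\circ M=T\circ\Phi$ together with $\Phi[\Delta_a]=\Gamma_a$) yields the one-step relations $\Phi\circ M_a=T_a\circ\Phi$ on $\Delta_a$, i.e.\ $\Phi\circ\psi_a=\tau_a\circ\Phi$ on $\Delta$ with $\tau_a:=T_a^{-1}$; iterating, $\Phi\circ(\psi_{a_0}\circ\cdots\circ\psi_{a_{t-1}})=(\tau_{a_0}\circ\cdots\circ\tau_{a_{t-1}})\circ\Phi$.

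For the first assertion the case of a maximal $\Xi$ is precisely the remark made just after the definition of $\Phi$, so the content is in the proper faces, and I would get at them by first showing that $\Phi$ fixes every face of $\Delta$. This reduces to $\Phi(v_i)=v_i$ for every vertex of $\Delta$: a self-homeomorphism of a simplex maps each $k$-face onto a $k$-face, since the stratification of a simplex by local corner type is topologically intrinsic, and an easy induction on dimension then shows that such a homeomorphism, if it fixes all vertices, fixes all faces. We already know $\Phi(v_1)=v_1$ from the uniqueness argument. For the remaining vertices, a short matrix computation shows that $M_a$ and $T_a$ agree on the vertices of $\Delta_a$---both send the $j$-th vertex of $\Delta_a$ to $v_j$---and that the $M$-orbit of $v_i$ runs through $v_i,v_{i-1},\ldots,v_1$ and then stays at $v_1$, so it consists entirely of vertices of $\Delta$, each lying in $\Delta_0$ or $\Delta_1$. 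Hence every itinerary $\abar\in\cantor$ of $v_i$ (for $M$) is simultaneously an itinerary for $T$, so $v_i\in\Delta_{\abar\restriction t}\cap\Gamma_{\abar\restriction t}$ for all $t$, whence $\varphi(\abar)=v_i=\upsilon(\abar)$ and $\Phi(v_i)=v_i$.

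Finally, I would assemble the first assertion. The $j$-th vertex of $\Delta_{a_0\ldots a_{t-1}}=\psi_{a_0}\circ\cdots\circ\psi_{a_{t-1}}[\Delta]$ is $\psi_{a_0}\circ\cdots\circ\psi_{a_{t-1}}(v_j)$, and likewise the $j$-th vertex of $\Gamma_{a_0\ldots a_{t-1}}$ is $\tau_{a_0}\circ\cdots\circ\tau_{a_{t-1}}(v_j)$; by the iterated one-step relation and $\Phi(v_j)=v_j$, the homeomorphism $\Phi\restriction\Delta_{a_0\ldots a_{t-1}}$ (onto $\Gamma_{a_0\ldots a_{t-1}}$) carries the former to the latter for each $j$. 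So it matches corresponding vertices, hence, by the relabeled form of the simplex fact from the previous paragraph, matches corresponding faces. That corresponding face is exactly the image of $\Xi$ under the combinatorial isomorphism $\Fcal_t\to\Bcal_t$, since by construction that isomorphism is the unique simplicial one realizing the vertex correspondence $\psi_{a_0}\circ\cdots\circ\psi_{a_{t-1}}(v_j)\leftrightarrow\tau_{a_0}\circ\cdots\circ\tau_{a_{t-1}}(v_j)$; hence $\Phi[\Xi]=\Lambda$ and $\Phi$ restricts to a homeomorphism of $\Xi$ onto $\Lambda$. I expect the real work to sit in the middle step---computing the action of $M_a$ and $T_a$ on the vertices of $\Delta_a$ and tracking the orbit of each $v_i$---together with the bookkeeping needed to confirm that the combinatorial isomorphism is indeed given by that vertex correspondence; the rest is formal.
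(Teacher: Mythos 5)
Your derivation of the displayed identity is correct and matches the paper's in substance: both come down to restricting $\Phi\circ M^t=T^t\circ\Phi$ to the cylinder $\Delta_{a_0\ldots a_{t-1}}$ and using the already-known fact that $\Phi$ carries it onto $\Gamma_{a_0\ldots a_{t-1}}$. The vertex computation ($\Phi(v_j)=v_j$, and more generally $\Phi$ sending the $j$th vertex of $\Delta_{a_0\ldots a_{t-1}}$ to the $j$th vertex of $\Gamma_{a_0\ldots a_{t-1}}$) is also correct.

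The gap is in the step where you pass from matching vertices to matching faces. You assert that ``a self-homeomorphism of a simplex maps each $k$-face onto a $k$-face, since the stratification of a simplex by local corner type is topologically intrinsic.'' This is false in the topological category. Already for a $2$-simplex, a corner and an edge-interior point have homeomorphic neighborhoods (a quarter-plane is homeomorphic to a half-plane), and a homeomorphism of a triangle can rotate the boundary circle, moving vertices off themselves. The stronger statement you actually invoke --- that a homeomorphism between two $n$-simplexes fixing corresponding vertices must carry faces to corresponding faces --- fails from dimension $3$ onward: on $\partial\Delta\cong S^2$, one can isotope the $1$-skeleton of the tetrahedron off itself while keeping the four vertices fixed, so a homeomorphism of the $3$-simplex can fix all four vertices yet move an edge into the interior of a $2$-face. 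Nothing in your argument constrains $\Phi\restriction\Delta_{a_0\ldots a_{t-1}}$ beyond being a vertex-matching homeomorphism, so you cannot conclude $\Phi[\Xi]=\Lambda$ for proper faces $\Xi$ this way.

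The paper sidesteps this by an approximation argument rather than a rigidity argument. It introduces the simplicial homeomorphisms $\Phi_t$ determined by the combinatorial isomorphism $\Fcal_t\to\Bcal_t$ (affine on each maximal simplex), uses Corollary~\ref{ref9} (mesh $\to0$) to get $\Phi_k\to\Phi$ and $\Phi_k^{-1}\to\Phi^{-1}$ uniformly, and then notes that for $p\in\Xi$ and $k\ge t$ the simplicial map $\Phi_k$ sends $p$ into $\Lambda$; closedness of $\Lambda$ gives $\Phi[\Xi]\subseteq\Lambda$, and the symmetric argument for $\Phi^{-1}$ gives equality. If you want to keep your vertex computation, the cleanest patch is essentially this one: replace the ``simplex fact'' with the observation that for every $k\ge t$ the subdivision $\Fcal_k$ of $\Xi$ is carried by the combinatorial isomorphism onto the subdivision $\Bcal_k$ of $\Lambda$, and the induced piecewise-affine approximants to $\Phi$ converge uniformly to $\Phi$.
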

\begin{proof}
We can give an equivalent definition of $\Phi$ and $\Phi^{-1}$ as follows. For each $t$, we define a simplicial homeomorphism $\Phi_t:\Delta\to\Delta$ by first mapping the vertices of $\Fcal_t$ to the vertices of $\Bcal_t$ according to the combinatorial isomorphism, and then using barycentric coordinates to extend $\Phi_t$ to all of $\Delta$. More precisely, if $\Delta_{a_0\ldots a_{t-1}}$ has vertices $\vect w1{{n+1}}$ and $\Delta_{a_0\ldots a_{t-1}}\ni p=\sum\alpha_iw_i$ in barycentric coordinates, then $\Phi_t(p)=\sum\alpha_i\Phi_t(w_i)$. Using Corollary~\ref{ref9}, one sees easily that $\Phi=\lim_{t\to\infty}\Phi_t$ and 
$\Phi^{-1}=\lim_{t\to\infty}\Phi_t^{-1}$ in the topology of uniform convergence. Now, for every $p\in\Xi$ and every $k\ge t$, we have $\Phi_k(p)\in\Lambda$; since $\Lambda$ is closed, $\Phi(p)=\lim_{k\to\infty}\Phi_k(p)\in\Lambda$. Hence $\Phi[\Xi]\subseteq\Lambda$, and the same argument applied to $\Phi^{-1}$ shows that $\Phi[\Xi]=\Lambda$.

The mappings
\begin{align*}
M_{a_{t-1}}\circ\cdots\circ M_{a_0}&=M\restriction\Delta_{a_0\ldots a_{t-1}}
:\Delta_{a_0\ldots a_{t-1}}\to\Delta,\\
\intertext{and}
T_{a_{t-1}}\circ\cdots\circ T_{a_0}&=T\restriction\Gamma_{a_0\ldots a_{t-1}}
:\Gamma_{a_0\ldots a_{t-1}}\to\Delta,
\end{align*}
are both homeomorphisms, the former fractional-linear and the latter affine. Our second statement is then immediate, since it amounts to the restriction of the identity $\Phi\circ M=T\circ\Phi$ to $\Delta_{a_0\ldots a_{t-1}}$.
\end{proof}

Next, \S1(1) generalizes to the following proposition.

\begin{proposition}
$\Phi$ is an orientation-preserving homeomorphism.
\end{proposition}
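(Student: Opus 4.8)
The plan is to establish orientation-preservation by exhibiting $\Phi$ as a uniform limit of orientation-preserving simplicial homeomorphisms, using the approximation $\Phi=\lim_{t\to\infty}\Phi_t$ from the proof of Proposition~\ref{ref10}. Concretely, I would first check that each $\Phi_t$ is orientation-preserving. By construction $\Phi_t$ is the simplicial homeomorphism induced by the combinatorial isomorphism $\Fcal_t\to\Bcal_t$ that fixes the vertices of $\Delta$; on each maximal simplex it is the unique affine map sending the (ordered) vertices of $\Delta_{a_0\ldots a_{t-1}}$ to the corresponding vertices of $\Gamma_{a_0\ldots a_{t-1}}$. To see the sign is positive, I would argue inductively via the inverse branches: $\Delta_{a_0\ldots a_{t-1}}=\psi_{a_0}\circ\cdots\circ\psi_{a_{t-1}}[\Delta]$ and $\Gamma_{a_0\ldots a_{t-1}}$ is the image of $\Delta$ under the corresponding composition of $\tau_{a_i}=T_{a_i}^{-1}$. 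It therefore suffices to know that the two branches $M_0,M_1$ of $M$ and the two branches $T_0,T_1$ of $T$ carry the same orientation behaviour on the vertex labellings; equivalently, that $\det(VA_a^{-1}V^{-1})$ and $\det(VB_a^{-1}V^{-1})$ have the same sign for $a=0,1$. Since $\det V=\pm1$, this reduces to comparing $\det A_a$ with $\det B_a$, and from the explicit description of $A_a$ and $B_a$ (the latter obtained from the former by replacing the two $1$'s in the last column by $1/2$) one computes directly that $\det A_0=\det B_0$ and $\det A_1=\det B_1$ up to a positive factor — indeed $B_a$ is obtained from $A_a$ by scaling the last column, which only multiplies the determinant by a positive number. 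Hence the combinatorial isomorphism preserves the induced orientation on every maximal simplex at every stage, so each $\Phi_t$ is orientation-preserving.

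Next, I would pass to the limit. A homeomorphism $h$ of a compact $n$-manifold-with-boundary to itself has a well-defined degree $\pm1$, and degree is locally constant under uniform convergence of homeomorphisms (more elementarily: orientation-preservation of a homeomorphism of $\Delta$ fixing $\partial\Delta$ setwise can be detected by the induced map on $H_n(\Delta,\partial\Delta)\cong\Zbb$, and if $h_t\to h$ uniformly with all $h_t$ orientation-preserving homeomorphisms then, for $t$ large, $h$ and $h_t$ are homotopic through homeomorphisms, so $h$ is orientation-preserving too). Applying this with $h_t=\Phi_t$, $h=\Phi$, and invoking $\Phi=\lim\Phi_t$ uniformly from Proposition~\ref{ref10}, we conclude that $\Phi$ is orientation-preserving.

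The main obstacle is making the limiting step rigorous without heavy machinery: one must be sure that uniform convergence of homeomorphisms of $\Delta$ to a homeomorphism really does transfer orientation. The cleanest route is probably the homological one: for a homeomorphism $h\colon\Delta\to\Delta$ with $h[\partial\Delta]=\partial\Delta$, the sign of $h$ on $H_n(\Delta,\partial\Delta;\Zbb)\cong\Zbb$ is its orientation character, and this sign is determined by the behaviour of $h$ on any single small ball in the interior of $\Delta$. Since $\Phi_t\to\Phi$ uniformly, pick an interior point $p$ and a small closed ball $\bar B$ around $p$ with $\Phi[\bar B]$ contained in the interior; for $t$ large, $\Phi_t[\bar B]$ is contained in the same interior region and $\Phi_t$, $\Phi$ agree closely enough on $\bar B$ that they induce the same map on the relevant local homology group $H_n(\Delta,\Delta\setminus\Phi[\bar B]^\circ)$, whence the same sign. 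Everything else — the determinant computations for $A_a$ versus $B_a$, the inductive bookkeeping of orientations along the branch compositions — is routine. Alternatively, one can avoid even this by a direct argument: $\Phi$ restricts, for each $t$ and each maximal $\Delta_{a_0\ldots a_{t-1}}\in\Fcal_t$, to a homeomorphism onto the corresponding $\Gamma_{a_0\ldots a_{t-1}}\in\Bcal_t$ respecting the vertex labelling (Proposition~\ref{ref10}), and since mesh$(\Fcal_t)\to0$ (Corollary~\ref{ref9}) one can read off orientation-preservation of $\Phi$ at any interior point from orientation-preservation of $\Phi_t$ on the tiny simplex of $\Fcal_t$ containing that point.
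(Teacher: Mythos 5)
Your plan matches the paper's in outline: first show each $\Phi_t$ is orientation-preserving, then transfer this to the limit $\Phi$ homologically. The first step is essentially identical --- the paper records that $\Phi_t$ on a maximal simplex $\Delta_{a_0\ldots a_{t-1}}$ is the affine map with matrix $(VB_{a_0}\cdots B_{a_{t-1}})(VA_{a_0}\cdots A_{a_{t-1}}D^{-1})^{-1}$, whose determinant is positive precisely because $D$ (a diagonal of positive entries) has positive determinant and $\det A_a,\det B_a$ have the same sign; your reduction to the sign of $\det A_a$ versus $\det B_a$ (the latter being half the former, since $B_a$ halves the last column of $A_a$) is the same observation. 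Where the two arguments diverge is at the step you flagged as the main obstacle. You appeal to a general principle that uniform convergence of homeomorphisms preserves degree, or alternatively a "closeness on a small ball" local-homology estimate for $t$ large; both can be made to work, but neither is carried out, and your intermediate claim that $\Phi$ and $\Phi_t$ are homotopic \emph{through homeomorphisms} is neither proved nor needed. The paper instead fixes a single $t$ (just large enough that $\Fcal_t$ has a vertex $p\in\Delta^\circ$, with corresponding $q\in\Bcal_t$) and writes down the straight-line homotopy $F(x,r)=(1-r)\Phi(x)+r\Phi_t(x)$ on $X=\Delta\setminus\{p\}$. That this lands in $Y=\Delta\setminus\{q\}$ is immediate from Proposition~\ref{ref10}: for $x\in\Delta_{a_0\ldots a_{t-1}}$, both $\Phi(x)$ and $\Phi_t(x)$ lie in the convex set $\Gamma_{a_0\ldots a_{t-1}}\setminus\{q\}$. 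This exploits the combinatorial structure directly, requires no quantitative "$t$ large" estimate, and only needs a homotopy through continuous (not homeomorphic) maps, since one then compares the induced maps on $H_{n-1}$ of the punctured simplex, which is exactly the orientation invariant by~\cite[p.~233]{hatcher02}. So: same strategy, but the paper's explicit convexity homotopy is the clean way to fill in the gap you correctly identified.
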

\begin{proof}
Choose $t$ such that $\Fcal_t$ contains a vertex $p$ in the topological interior $\Delta^\circ$ of $\Delta$. Let $\Delta_{a_0\ldots a_{t-1}}\in\Fcal_t$, and let $D$ be the diagonal matrix whose entries along the main diagonal are those in the last row of $VA_{a_0}\cdots A_{a_{t-1}}$. Then the affine homeomorphism $\Phi_t\restriction\Delta_{a_0\ldots a_{t-1}}$ defined in the proof of Proposition~\ref{ref10} is expressed by the matrix $(VB_{a_0}\cdots B_{a_{t-1}})(VA_{a_0}\cdots A_{a_{t-1}}D^{-1})^{-1}$, which has last row $(0\cdots0\,1)$ and determinant $>0$ (because $D$ has positive determinant, and $A_a,B_a$ have determinant of the same sign, for $a\in\{0,1\}$). This holds for every maximal simplex $\Delta_{a_0\ldots a_{t-1}}$ in $\Fcal_t$, and it follows that $\Phi_t$ is orientation-preserving.

Let now $q$ be the vertex in $\Bcal_t$ corresponding to $p$. Again $q$ is in $\Delta^\circ$, and $\Phi(p)=\Phi_t(p)=q$. Let $X=\Delta\setminus\{p\}$ and $Y=\Delta\setminus\{q\}$. Then $\Phi\restriction X$ and $\Phi_t\restriction X$ are homeomorphisms from $X$ to $Y$, and we claim that they are homotopic. Indeed, a homotopy $F:X\times\oi\to Y$ is given by $(x,r)=(1-r)\Phi(x)+r\Phi_t(x)$. This works because, assuming $x\in\Delta_{a_0\ldots a_{t-1}}$, we have by Proposition~\ref{ref10} that $\Phi(x)$ and $\Phi_t(x)$ are both in $\Gamma_{a_0\ldots a_{t-1}}\setminus\{q\}$. Since $\Gamma_{a_0\ldots a_{t-1}}\setminus\{q\}$ is convex, the image of $F$ is $Y$. One checks easily that $F$ is continuous, and this establishes our claim.

Note that, given any points $p',q'\in\Delta^\circ$, the homology groups $H_{n-1}(\Delta\setminus\{p'\})$ and $H_{n-1}(\Delta\setminus\{q'\})$
(coefficients in $\Zbb$) are canonically identifiable, since they are both canonically isomorphic to the relative homology group $H_n(\Delta,\Delta\setminus B)$, where $B$ is any ball in $\Delta^\circ$ containing $p'$ and $q'$.
By~\cite[p.~233]{hatcher02}, we have that $\Phi$ (respectively, $\Phi_t$) is orientation-preserving iff $\Phi\restriction X$ (respectively, $\Phi_t\restriction X$) induces in homology the identity mapping between $H_{n-1}(X)$ and $H_{n-1}(Y)$ (these two infinite cyclic groups canonically identified as above). Since $\Phi\restriction X$ and $\Phi_t\restriction X$ are homotopic, they induce the same isomorphism in homology, and we conclude that $\Phi$ is orientation-preserving iff so is $\Phi_t$.
\end{proof}

As remarked at the beginning of this section, a proper generalization of \S1(2) is critical. Indeed, the periodicity properties of the various multidimensional continued fraction algorithms are a long-standing open problem. Even for the most studied algorithm, the Jacobi-Perron one~\cite{schweiger73}, \cite{schweiger00}, it is still unknown whether points $p=(\vect\alpha1n)$ such that $[\Qbb(\vect\alpha1n):\Qbb]\le n+1$ are always preperiodic under the piecewise-fractional map associated to the algorithm. The situation for the \monk{} algorithm is no better; we list a few simple facts in order to describe the problem.

Let $p\in\Delta$; the \newword{grand orbit} of $p$ under $M$ is
$$
\GO_M(p)=\{q\in\Delta:M^t(p)=M^s(q)\text{ for some $t,s\ge0$}\},
$$
and its \newword{eventual periodic orbit} is
$$
\EPO_M(p)=\{q\in\Delta:q=M^t(p)=M^s(p)\text{ for some $0\le t<s$}\}.
$$
$\EPO_M(p)$ is always a finite set, possibly empty; if it is nonempty, then $p$ is \newword{preperiodic} under $M$. One defines $\GO_T(p)$ and $\EPO_T(p)$ similarly; of course $\Phi\bigl[\GO_M(p)\bigr]=\GO_T\bigl(\Phi(p)\bigr)$ and $\Phi\bigl[\EPO_M(p)\bigr]=\EPO_T\bigl(\Phi(p)\bigr)$.
Let $\dyadic=\{a/2^m\in\Qbb:a,m\in\Zbb\text{ and }m\ge0\}$ be the ring of dyadic rationals. It is a p.i.d., since it is a localization of the p.i.d.~$\Zbb$. 
For $p=(\vect\alpha1n)\in\Delta$, we write $\Qbb(p)$ for the field $\Qbb(\vect\alpha1n)$, and we write $\dyadic(p)$ for the $\dyadic$-module $\dyadic\alpha_1+\cdots+\dyadic\alpha_n+\dyadic$, which is free of rank $\le n+1$.
Since the matrices $M_0,M_1$ determining $M$ are in $\GL_{n+1}\Zbb$, we have clearly $\Qbb(p)=\Qbb(q)$, for any $q\in\GO_M(p)$.
Analogously, the matrices $T_0,T_1\in\Mat_{n+1}\Zbb$ determining $T$ are in $\GL_{n+1}\dyadic$, and hence $\dyadic(p)=\dyadic(q)$, for any $q\in\GO_T(p)$.
We call a point 
$p=(\vect\alpha1n)\in\Delta$ a \newword{rational point} (respectively, a \newword{dyadic point}) if $\vect\alpha1n\in\Qbb$ (respectively,
$\vect\alpha1n\in\dyadic$).
In order to prove that $\Phi$ determines a 1-1 correspondence between the rational points and the dyadic ones, we need two technical lemmas.

Remember that a nonsingular matrix $H=H_{ij}\in\Mat_{n\times n}\Zbb$ is in row Hermite Normal Form (HNF) if it is upper triangular, $H_{jj}>0$ for every $j$, and $0\le H_{ij}<H_{jj}$ for every $1\le i<j$.
Every nonsingular $A\in\Mat_{n\times n}\Zbb$ has a unique HNF (i.e., there exists a unique $H$ in HNF and a ---unique--- $X\in\GL_n\Zbb$ such that $H=XA$)~\cite[\S2.4.2]{cohen93}. In particular, two nonsingular matrices $A,B\in\Mat_{n\times n}\Zbb$ have the same HNF iff there exists $X\in\GL_n\Zbb$ such that $B=XA$; in this case we write $A\sim B$.

\begin{lemma}
Let\label{ref5} $t\ge1$, and let $\vect a0{{t-1}}\in\{0,1\}$. The matrices $T_{a_{t-1}}\cdots T_{a_0}$ and $T_0^t$ have the same HNF.
\end{lemma}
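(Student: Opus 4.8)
The plan is to convert the statement about Hermite normal forms into an equality of column lattices of products of the matrices~$B_a$, and to establish that equality by induction on~$t$. Recall from the excerpt that two nonsingular integer matrices have the same HNF exactly when one is carried into the other by left multiplication by an element of $\GL_{n+1}(\Zbb)$. Since $T_a=VB_a^{-1}V^{-1}$, we have $T_{a_{t-1}}\cdots T_{a_0}=V(B_{a_0}\cdots B_{a_{t-1}})^{-1}V^{-1}$ and $T_0^t=V(B_0^t)^{-1}V^{-1}$; because $V\in\GL_{n+1}(\Zbb)$ and conjugation by~$V$ preserves $\GL_{n+1}(\Zbb)$, the lemma is equivalent to $(B_{a_0}\cdots B_{a_{t-1}})^{-1}B_0^t\in\GL_{n+1}(\Zbb)$, i.e., to the statement that the full-rank lattices $(B_{a_0}\cdots B_{a_{t-1}})\Zbb^{n+1}$ and $B_0^t\,\Zbb^{n+1}$ coincide.

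I would prove this by induction on $t$, the case $t=0$ of the empty product being trivial. For the inductive step, inserting the inductive hypothesis for the tail $a_1\cdots a_{t-1}$ into the identity $(B_{a_0}\cdots B_{a_{t-1}})\Zbb^{n+1}=B_{a_0}\bigl((B_{a_1}\cdots B_{a_{t-1}})\Zbb^{n+1}\bigr)=B_{a_0}B_0^{\,t-1}\Zbb^{n+1}$ reduces everything to showing that $B_0^{-1}B_{a_0}$ preserves the lattice $B_0^{\,t-1}\Zbb^{n+1}$. This is automatic when $a_0=0$, so the whole content of the lemma is that, writing $k=t-1$,
$$B_0^{-k}(B_0^{-1}B_1)B_0^{k}\in\GL_{n+1}(\Zbb)\qquad\text{for every }k\ge0.$$
Now $B_0$ and $B_1$ differ only in their first column ($e_1$ versus $e_2$, where $e_1,\ldots,e_{n+1}$ denotes the standard basis of column vectors), and a short computation with the explicit inverse $B_0^{-1}$ gives $B_0^{-1}B_1=I+2(e_{n+1}-e_1)\,e_1^{tr}$, a rank-one perturbation of the identity of determinant $-1$.

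Consequently the conjugate above equals $I+2\,[B_0^{-k}(e_{n+1}-e_1)]\,[e_1^{tr}B_0^{k}]$; its determinant is automatically $-1$, so it suffices to prove that the rank-one correction is an integer matrix. For this I would argue as follows: (i)~$B_0^{-1}$ is an integer matrix --- it fixes $e_1$, shifts $e_j\mapsto e_{j-1}$ for $3\le j\le n+1$, and sends $e_2\mapsto 2e_{n+1}-e_1$ --- so iterating this twisted cyclic shift gives $B_0^{-k}(e_{n+1}-e_1)=2^{\lfloor k/n\rfloor}(e_j-e_1)$ for a suitable index~$j$; (ii)~tracking the first row $e_1^{tr}B_0^{k}$ of $B_0^k$ under right multiplication by~$B_0$ --- the first coordinate stays $1$, the middle coordinates are shifted, and the new last coordinate is the average of the old first two --- produces a recursion of the shape $\sigma_k=(1+\sigma_{k-n})/2$, whence every entry of $e_1^{tr}B_0^{k}$ lies in $2^{-\lceil k/n\rceil}\Zbb$; (iii)~combining, each entry of $2\,[B_0^{-k}(e_{n+1}-e_1)]\,[e_1^{tr}B_0^{k}]$ lies in $2\cdot 2^{\lfloor k/n\rfloor}\cdot 2^{-\lceil k/n\rceil}\Zbb\subseteq\Zbb$, because $\lceil k/n\rceil\le\lfloor k/n\rfloor+1$. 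This closes the induction.

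The main obstacle is step~(ii): one has to show that the denominators occurring in the first row of $B_0^k$ grow only like $2^{\lceil k/n\rceil}$, rather than like the trivial $2^{k}$, and that this rate is exactly in step with the growth rate $2^{\lfloor k/n\rfloor}$ of $B_0^{-k}(e_{n+1}-e_1)$; the elementary inequality $\lceil k/n\rceil\le\lfloor k/n\rfloor+1$ is precisely what makes the two cancel. Everything else --- the reduction through~$V$, the explicit forms of $B_0^{-1}$ and of $B_0^{-1}B_1$, and the lattice bookkeeping in the induction --- is routine computation with the sparse matrices of the previous section.
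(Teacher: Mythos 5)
Your proof is correct, and it travels a genuinely different road than the paper's. The paper reduces $T_a$ to the $n\times n$ block $Q_a$ (last row and column of $T_a$ deleted, so all the projective bookkeeping disappears), then manipulates Hermite normal forms directly: it records the identity $Q_0^n=2I_n$, writes down the HNF $E_r$ of $Q_0^r$ for $0\le r\le n-1$ explicitly, observes $Q_1=DQ_0$ with $D=\mathrm{diag}(-1,1,\ldots,1)$ and $E_rD\sim E_r$ by elementary row operations, and runs the induction $Q_0^{t-1}Q_1\sim Q_0^t$ through those facts. You instead push the question back through the conjugation by $V$ to the matrices $B_a$, translate ``same HNF'' into equality of column lattices, and exploit the rank-one structure $B_0^{-1}B_1=I+2(e_{n+1}-e_1)e_1^{tr}$: conjugation by $B_0^k$ preserves the rank-one form and the determinant $-1$, and integrality drops out because the $2$-adic growth rate $2^{\lfloor k/n\rfloor}$ of $B_0^{-k}(e_{n+1}-e_1)$ (coming from the twisted cyclic shift $B_0^{-n}(e_{n+1}-e_1)=2(e_{n+1}-e_1)$) exactly matches the denominator rate $2^{-\lceil k/n\rceil}$ of the first row $e_1^{tr}B_0^k$, with $\lceil k/n\rceil\le\lfloor k/n\rfloor+1$ supplying the needed slack --- I checked the recursion $\sigma_k=(1+\sigma_{k-n})/2$ and it gives $\sigma_k=1-2^{-\lceil k/n\rceil}$, so the bound you need does hold for every entry. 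Your version has the advantage of isolating the structural reason the lemma is true (a single rank-one perturbation, conjugated) and never writes down a Hermite form explicitly; the paper's version avoids the rate-matching estimate by packaging everything into the compact closed forms $Q_0^n=2I$ and $E_r$. Both hinge on the same underlying periodicity (the paper's $Q_0^n=2I$ is the mirror image of your doubling of $e_{n+1}-e_1$ every $n$ steps), and both do an induction that reduces the $a_0=1$ case to a fixed computation.
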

\begin{proof}
The last row of $T_0$ and $T_1$, and hence of all products $T_{a_{t-1}}\cdots T_{a_0}$, is $(0\cdots0\,1)$. Hence we can safely replace $T_a$ with the $n\times n$ matrix $Q_a$ obtained from $T_a$ by removing the last row and the last column. It now suffices to show that $Q_{a_{t-1}}\cdots Q_{a_0}$ and $Q_0^t$ have the same HNF. Direct computation shows that the entries of $Q_0$ are as follows:
$$
(Q_0)_{ij}=\begin{cases}
1, & \text{if $ij=11$, or $ij=1n$, or $i=j+1$;}\\
-1, & \text{if $i\ge2$ and $j=n$;}\\
0, & \text{otherwise.}
\end{cases}
$$
We have $Q_0^n=2E_0$, where $E_0$ is the $n\times n$ identity matrix;
in particular, all powers of $Q_0^n$ commute with everything.
For $1\le r\le n-1$, denote the HNF of $Q_0^r$ by $E_r$; we have explicitly:
$$
(E_r)_{ij}=\begin{cases}
2, & \text{if $i=j\ge n-r+1$;}\\
1, & \text{if $i=j < n-r+1$, or $i<j=n-r+1$;}\\
0, & \text{otherwise.}
\end{cases}
$$
We work by induction on $t$. Denote by $D$ the $n\times n$ diagonal matrix whose entries along the diagonal are $-1,1,\ldots,1$. Since $Q_1=DQ_0$, we always have $Q_{a_0}\sim Q_0$, and the case $t=1$ is settled. By inductive hypothesis, assume $Q_{a_{t-1}}\cdots Q_{a_1}\sim Q_0^{t-1}$. Then $Q_{a_{t-1}}\cdots Q_{a_1}Q_{a_0}\sim 
Q_0^{t-1}Q_{a_0}$, and we claim that $Q_0^{t-1}Q_{a_0}\sim Q_0^t$.
This is immediate if $a_0=0$, so we assume $a_0=1$. Let $t-1=mn+r$, for some $m\ge0$ and $0\le r<n$. Note that $E_rD\sim E_r$; indeed, $E_rD$ is obtainable from $E_r$ by row operations, namely by first forming $DE_r$ and then, if $0<r$, summing to the first row of $DE_r$ the $(n-r+1)$th row. Therefore we have
\begin{multline*}
Q_0^{t-1}Q_1=Q_0^{mn}Q_0^rDQ_0=Q_0^rDQ_0^{mn+1}\sim E_rDQ_0^{mn+1}\sim\\
\sim E_rQ_0^{mn+1}\sim Q_0^rQ_0^{mn+1}=Q_0^t,
\end{multline*}
as claimed.
\end{proof}

\begin{lemma}
Let\label{ref11} $s>0$, let $\vect a0{{s-1}}\in\{0,1\}$, and let $M_*=M_{a_{s-1}}\cdots M_{a_0}$, $T_*=T_{a_{s-1}}\cdots T_{a_0}$.
Then:
\begin{itemize}
\item[(i)] there exists a unique point $q=(\vect\alpha1n)\in\Delta$ such that $(\alpha_1\cdots\alpha_n\,1)^{tr}$ is a right eigenvector for $M_*$ whose corresponding eigenvalue $\xi$ is positive;
we then have $\Qbb(\xi)=\Qbb(\vect\alpha1n)$;
\item[(ii)] an analogous statement holds for $T_*$; in this case $\xi=1$ and $\vect\alpha1n\in\Qbb$.
\end{itemize}
\end{lemma}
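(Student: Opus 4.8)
The plan is to realize $q$ as the unique fixed point of the inverse branch of $M_*$, and to extract the field equality from the dimension of the relevant eigenspace.

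Write $A_*=A_{a_0}\cdots A_{a_{s-1}}$, so that $M_*=VA_*^{-1}V^{-1}$ and its inverse branch $\psi_*:=M_*^{-1}=\psi_{a_0}\circ\cdots\circ\psi_{a_{s-1}}$ is (as a projective map) $VA_*V^{-1}$, a homeomorphism of $\Delta$ onto $\Delta_{a_0\ldots a_{s-1}}\subseteq\Delta$ (Propositions~\ref{ref4} and~\ref{ref10}). Since $\psi_*$ is then a continuous selfmap of the ball $\Delta$, Brouwer's theorem gives a fixed point $q\in\Delta$; writing $\pp q=(\alpha_1\cdots\alpha_n\,1)^{tr}$ for the representative of $q$ with last coordinate $1$, the equation $\psi_*(q)=q$ reads $VA_*V^{-1}\pp q=\mu\pp q$, i.e.\ $A_*(V^{-1}\pp q)=\mu(V^{-1}\pp q)$ for some scalar $\mu$. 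As $V^{-1}\pp q$ is the nonnegative, nonzero barycentric-coordinate vector of $q\in\Delta$ and $A_*$ is nonnegative and invertible, $\mu>0$, whence $M_*\pp q=\mu^{-1}\pp q$ with $\xi:=\mu^{-1}>0$. For uniqueness, $\psi_*^k[\Delta]=\Delta_{(a_0\ldots a_{s-1})^{k}}$ (the word $a_0\ldots a_{s-1}$ repeated $k$ times); these simplexes are nested and $\mesh(\Fcal_{ks})\to 0$ by Corollary~\ref{ref9}, so they meet in a single point, which must be $q$. If $p$ satisfies the statement then $\xi>0$ forces $\psi_*(p)=p$, hence $p=\psi_*^k(p)\in\Delta_{(a_0\ldots a_{s-1})^{k}}$ for all $k$, so $p=q$; in particular $\pp q$ is the only eigenvector of $M_*$ with positive eigenvalue and last coordinate $1$.

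Reading off the last coordinate in $M_*\pp q=\xi\pp q$ exhibits $\xi$ as a $\Zbb$-linear combination of $1,\alpha_1,\ldots,\alpha_n$, so $\Qbb(\xi)\subseteq\Qbb(\alpha_1,\ldots,\alpha_n)$. For the reverse inclusion it suffices to prove $\dim\ker(M_*-\xi I)=1$ (equivalently $\dim\ker(A_*-\mu I)=1$): that kernel is then a line defined over $\Qbb(\xi)$, being the kernel of a matrix with entries in $\Qbb(\xi)$, and it contains $\pp q$, so dividing a generator by its (nonzero) last entry puts every $\alpha_i$ in $\Qbb(\xi)$. First, Perron--Frobenius produces a nonnegative eigenvector of $A_*$ for the eigenvalue $\rho(A_*)$, which projects to a point of $\Delta$ meeting the statement with positive eigenvalue $\rho(A_*)^{-1}$; by uniqueness that point is $q$, so $\mu=\rho(A_*)$. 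If $A_*$ is irreducible, $\rho(A_*)$ is a simple eigenvalue and we are done. If $A_*$ is reducible, the uniqueness of $q$ says the cone of nonnegative $\mu$-eigenvectors of $A_*$ is a single ray, and an analysis of the access relation among the basic (spectral) classes of $A_*$ shows this still forces $\dim\ker(A_*-\mu I)=1$ unless $A_*$ is a power of $A_0$; in that case one argues directly, since $A_0$ fixes $\pp v_1$ ($v_1$ being the only point of $\partial\Delta$ fixed by $M$, as in the uniqueness proof of Theorem~\ref{ref1}), so $q=v_1$, $(\alpha_1,\ldots,\alpha_n)=(0,\ldots,0)$, $\xi=1$ and $\Qbb(\xi)=\Qbb=\Qbb(\alpha_1,\ldots,\alpha_n)$.

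Part~(ii) is easier and parallels the above. Here $T_*=VB_*^{-1}V^{-1}$ with $B_*=B_{a_0}\cdots B_{a_{s-1}}$ has last row $(0\cdots0\,1)$, so any eigenvector $(\alpha_1\cdots\alpha_n\,1)^{tr}$ of $T_*$ must have eigenvalue $\xi=1$ (compare last coordinates) and is precisely a fixed point of the affine selfmap $T_*$ of $\pi$. The inverse-branch/mesh argument of part~(i), now with the simplexes $\Gamma_{(a_0\ldots a_{s-1})^{k}}$, yields a unique such $q\in\Delta$. Writing $T_*=\bigl(\begin{smallmatrix}Q&r\\0&1\end{smallmatrix}\bigr)$, uniqueness forces $Q-I$ invertible, so $(\alpha_1,\ldots,\alpha_n)^{tr}=-(Q-I)^{-1}r$; as $V\in\GL_{n+1}\Zbb$ and each $B_a$ has entries in $\dyadic$ with $\det B_a=\pm\tfrac12$, the matrix $T_*$ (hence $Q$) has entries in $\dyadic$, so $(\alpha_1,\ldots,\alpha_n)\in\Qbb^n$ and $\Qbb(\xi)=\Qbb=\Qbb(\alpha_1,\ldots,\alpha_n)$. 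The step I expect to be the real work is the reducible case of~(i): showing that uniqueness of the nonnegative eigenvector of $A_*$, together with the combinatorics of which products $A_{a_0}\cdots A_{a_{s-1}}$ are reducible and of the resulting basic classes, forces the $\mu$-eigenspace to be one-dimensional outside the explicit exceptional family $A_0^s$, where $q$ collapses to the rational vertex $v_1$.
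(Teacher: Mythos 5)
Your existence and uniqueness argument for $q$ essentially replicates what the paper does via Lemma~\ref{ref7} (the paper applies it directly to the periodic word $\abar$; Brouwer is superfluous but harmless), and reading off the last coordinate of $M_*\pp q=\xi\pp q$ to get $\xi\in\Qbb(\alpha_1,\ldots,\alpha_n)$ is also correct and matches the paper. The problem is the reverse inclusion.

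You reduce $\Qbb(\alpha_1,\ldots,\alpha_n)\subseteq\Qbb(\xi)$ to the claim $\dim\ker(A_*-\mu I)=1$, handle the irreducible case by Perron--Frobenius, and then defer the reducible case to an unspecified ``analysis of the access relation among the basic classes,'' asserting that the conclusion holds outside the exceptional family $A_*=A_0^s$. That deferred analysis is exactly where the difficulty lies, and you neither carry it out nor justify the scope of the exception. Note that reducible products other than powers of $A_0$ certainly occur (for $n=2$, $A_0A_1$ has strongly connected components $\{2\}$ and $\{1,3\}$), and the exceptional family is genuinely nonempty (for $n=2$, $\ker(A_0^2-I)$ is two-dimensional), so the characterization you are leaning on is substantive, not a triviality to be waved through. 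As written, your proof of (i) is incomplete at its crucial step.

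The paper's route is genuinely different and sidesteps the dimension count entirely. It puts $A_*=A_{a_0}\cdots A_{a_{s-1}}$ into lower block-triangular Frobenius normal form $P^{-1}A_*P$ with irreducible diagonal blocks $E_1,\ldots,E_r$, and looks only at the bottom block $E_r$. The Perron eigenvalue $\rho$ of $E_r$ is simple \emph{for $E_r$}; its positive eigenvector $\beta$ therefore lies in $\Qbb(\rho)^m$ (being the kernel of the rank-$(m-1)$ matrix $E_r-\rho I$, which has entries in $\Qbb(\rho)$); and $P(0\cdots0\,\beta)^{tr}$ is a nonnegative $\rho$-eigenvector of $A_*$. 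By the uniqueness both you and the paper established, this vector must be $V^{-1}\pp q$ up to a positive scalar, so $\pp q$ is a real multiple of a vector in $\Qbb(\rho)^{n+1}$, whence $\alpha_i\in\Qbb(\rho)=\Qbb(\xi)$ after dividing by the last coordinate. Whether $\rho$ is a simple eigenvalue of the \emph{full} matrix $A_*$ is irrelevant, and the case $A_*=A_0^s$ needs no special treatment: there $E_r$ is the $1\times1$ block $[1]$, $\rho=1$, and $q=v_1$. To salvage your approach you would need to actually prove the eigenspace-dimension claim together with its exceptional set; it is cleaner and safer to argue as the paper does.

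A minor remark on (ii): ``uniqueness forces $Q-I$ invertible'' is a little quick, since a priori the affine fixed-point set of $T_*$ could be a line meeting $\Delta$ only at a boundary point; the paper avoids this by simply rerunning the Frobenius-form argument with $B_*$ in place of $A_*$ and reading $\xi=1$ from the last row of $T_*$.
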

\begin{proof}
Recall that we are identifying $\Rbb^n$ with the plane $\{x_{n+1}=1\}$ in $\Rbb^{n+1}$, the latter viewed as a space of column vectors. Accordingly, given a simplex $\Sigma$ in $\Rbb^n$, we write $\Rp\Sigma$ for the polyhedral cone $\{r(\alpha_1\cdots\alpha_n\,1)^{tr}:r\ge0\text{ and }(\vect\alpha1n)\in\Sigma\}$. Let $\abar\in\cantor$ be defined by $a_t=a_{t\pmod{s}}$. Then, for every $k\ge0$, we have $M_*^{-k}[\Rp\Delta]=\Rp\Delta_{\abar\restriction ks}$. By Lemma~\ref{ref7}, the intersection $\bigcap_{k\ge0}\Delta_{\abar\restriction ks}$ is the singleton of a point $q=(\vect\alpha1n)$.  This immediately implies that $M_*^{-1}$ has (up to scalar multiples) a unique eigenvector in $\Rp\Delta$, namely $(\alpha_1\cdots\alpha_n\,1)^{tr}$, whose corresponding eigenvalue $\xi^{-1}$ is positive, and the first statement in~(i) follows. Observe now that $V^{-1}M_*^{-1}V=A_{a_0}\cdots A_{a_{s-1}}$ is a nonnegative matrix. By the Perron-Frobenius theory (see, e.g., \cite[Chapter~III]{gantmacher59}), there exists a permutation matrix $P$ such that
$P^{-1}A_{a_0}\cdots A_{a_{s-1}}P$ has the block form
$$
\begin{pmatrix}
E_1 & 0 & 0 & 0 \\
* & E_2 & 0 & 0 \\
* & * & \ddots & 0 \\
* & * & * & E_r
\end{pmatrix}
$$
with each $E_i$ a nonsingular primitive matrix. The $m\times m$ matrix $E_r$ has a dominant simple eigenvalue $\rho>0$ whose corresponding one-dimensional right eigenspace is spanned by a strictly positive column vector $(\beta_1\cdots\beta_m)^{tr}\in\Qbb(\rho)^m$. Since $M_*^{-1}$ and $A_{a_0}\cdots A_{a_{s-1}}$ are conjugate by $V$, and the $V$-image of the positive orthant $\Rp^{n+1}$ of $\Rbb^{n+1}$ is $\Rp\Delta$, we have from the first part of the proof that 
$A_{a_0}\cdots A_{a_{s-1}}$ has exactly (up to scalar multiples) one eigenvector in the positive orthant whose corresponding eigenvalue is positive. This eigenvector is necessarily $P(0\cdots0\,\beta_1\cdots\beta_m)^{tr}$, and $\rho=\xi^{-1}$. Going back to $\Rp\Delta$, we have that $(\alpha_1\cdots\alpha_n\,1)^{tr}$ is a real multiple of $VP(0\cdots0\,\beta_1\cdots\beta_m)^{tr}$. Hence 
$(\alpha_1\cdots\alpha_n\,1)^{tr}$ is a real multiple of a vector in $\Qbb(\rho)^{n+1}$, and therefore $\vect\alpha1n\in\Qbb(\rho)=\Qbb(\xi)$; since $M_*$ has integer entries, $\xi\in\Qbb(\vect\alpha1n)$. The same proof shows~(ii); in this case $\xi=1$ because the last row of $T_*$ is $(0\cdots0\,1)$.
\end{proof}

Lemma~\ref{ref11}(i) should be compared with~\cite[Theorem~3.1]{brentjes81} and~\cite[Theorem~42]{schweiger00}. In both cases it is proved that a purely periodic point has coordinates in a field of the form $\Qbb(\xi)$, for $\xi$ an eigenvalue of an appropriate periodicity matrix. However, in the first case it is required that $\xi$ has maximal degree $n+1$, while in the second the periodicity matrix is assumed positive. In Lemma~\ref{ref11}(i) we do not require any of these assumptions (of course, the key point here is Lemma~\ref{ref7}, which does not necessarily hold for a generic multidimensional continued fraction algorithm).

\begin{theorem}
Let\label{ref8} $p\in\Delta$. Then:
\begin{itemize}
\item[(i)] $p$ is rational iff $\EPO_M(p)=\{v_1\}$ iff $p$ is a vertex of some $\Fcal_t$;
\item[(ii)] $p$ is dyadic iff $\EPO_T(p)=\{v_1\}$ iff $p$ is a vertex of some $\Bcal_t$.
\end{itemize}
In particular, the set of rational points $\GO_M(v_1)$ is mapped bijectively by $\Phi$ to the set of dyadic points $\GO_T(v_1)$.
Moreover, we have:
\begin{itemize}
\item[(iii)] $[\Qbb(p):\Qbb]\le n+1$ if $p$ is $M$-preperiodic;
\item[(iv)] $p$ is rational iff $p$ is $T$-preperiodic.
\end{itemize}
\end{theorem}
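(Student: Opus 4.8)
The plan is to prove the four items in the order (i), (ii), (iii), (iv), using the two technical lemmas and the conjugacy between $M$ and $T$. For the first equivalence in (i), one direction is essentially trivial: if $p$ is a vertex of some $\Fcal_t$, then $M^t(p)$ is a vertex of $\Fcal_0=\{\Delta\}$, i.e.\ $M^t(p)\in\{v_1,\ldots,v_{n+1}\}$; a direct inspection of the affine formula for $M$ shows that the only one of these vertices that is not immediately expelled onto a proper face and that is fixed is $v_1$ (indeed $M(v_1)=v_1$, and for the other vertices one chases the image forward until it lands on $v_1$ after finitely many steps). Hence $v_1\in\EPO_M(p)$, and since $v_1$ is fixed and $\EPO_M(p)$ is the full eventual cycle, $\EPO_M(p)=\{v_1\}$. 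For the reverse implication of the first equivalence, if $\EPO_M(p)=\{v_1\}$ then some $M^t(p)=v_1$, and since $v_1$ is a vertex of $\Delta=\Delta_{\abar\restriction 0}$ and the inverse branches $\psi_a$ map vertices of $\Fcal_t$ to vertices of $\Fcal_{t+1}$, pulling $v_1$ back along the itinerary $a_0\ldots a_{t-1}$ of $p$ exhibits $p$ as a vertex of $\Fcal_t$. For ``$p$ a vertex of some $\Fcal_t$ $\To$ $p$ rational'' just note that all vertices of all $\Fcal_t$ are rational by construction (the $A_a$ have integer entries, so $VA_{a_0}\cdots A_{a_{t-1}}$ has integer columns). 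The substantive direction is ``$p$ rational $\To$ $p$ a vertex of some $\Fcal_t$'': here I would argue that a rational $p$ is $M$-preperiodic (its primitive projective coordinate vector $\pp p\in\Zbb^{n+1}$ has, at each step, image $A_{a}^{-1}\pp p$ of controlled size, so the orbit of $\pp p$ lies in a bounded set of integer vectors and must cycle), and then use Lemma~\ref{ref11}(i): if $p$ were preperiodic but not eventually $v_1$, its eventual period would be some $M_*=M_{a_{s-1}}\cdots M_{a_0}$ with a point $q\in\Delta$ purely periodic under $M_*$; but $q$ is then the unique positive-eigenvalue eigenvector of $M_*$, whose eigenvalue $\xi$ generates $\Qbb(q)$, and one checks (using that the relevant Perron eigenvalue $\rho=\xi^{-1}$ of the nonnegative integer matrix $A_{a_0}\cdots A_{a_{s-1}}$ is an algebraic integer which, for $q$ rational, forces $\xi\in\Qbb$, i.e.\ $\xi$ is a positive rational eigenvalue of a $\GL_{n+1}\Zbb$ matrix) that the only possibility compatible with $q\in\Delta$ is the fixed point $v_1$. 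I expect this to be the main obstacle, and I would lean on Lemma~\ref{ref7} (topological convergence) to pin down that the eigenvector is unique and lies at the intersection of the nested periodic simplexes.

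For (ii) the argument is the exact transcription of (i) under $T$ in place of $M$, with ``rational'' replaced by ``dyadic'' and $\Fcal_t$ by $\Bcal_t$: vertices of $\Bcal_t$ are dyadic because the $B_a$ have entries in $\dyadic$, so $VB_{a_0}\cdots B_{a_{t-1}}$ has columns in $\dyadic^{n+1}$ (bottom row still $(1\cdots1\,1)$); conversely a dyadic point is $T$-preperiodic because, scaling by a common power of $2$ to clear denominators, the $T_a$ act on a lattice $\frac1{2^k}\Zbb^{n+1}$ by a $\GL_{n+1}\dyadic$ matrix that does not increase the denominator past a fixed bound (the last row is $(0\cdots0\,1)$, so the last coordinate stays $1$ and the numerators stay bounded because $T$ maps the bounded region $\Delta$ into itself), whence the orbit is finite; and Lemma~\ref{ref11}(ii) gives that the unique eventual cycle must be the fixed point $v_1$. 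The ``in particular'' clause is then immediate: $\GO_M(v_1)$ is exactly the set of rational points by (i) (a point is rational iff it eventually maps to $v_1$ iff it lies in the grand orbit of $v_1$), $\GO_T(v_1)$ is exactly the set of dyadic points by (ii), and $\Phi$ conjugates $M$ to $T$ and fixes $v_1$ (shown in the proof of Theorem~\ref{ref1}), so $\Phi[\GO_M(v_1)]=\GO_T(\Phi(v_1))=\GO_T(v_1)$ bijectively.

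For (iii), let $p$ be $M$-preperiodic, so $M^t(p)=q$ for some $q$ that is purely periodic with period matrix $M_*=M_{a_{s-1}}\cdots M_{a_0}$. Then $q$ is the unique positive-eigenvalue eigenvector of $M_*$ from Lemma~\ref{ref11}(i), hence $\Qbb(q)=\Qbb(\xi)$ with $\xi$ a root of the characteristic polynomial of the $(n+1)\times(n+1)$ integer matrix $M_*$, so $[\Qbb(q):\Qbb]\le n+1$. Finally, since $q\in\GO_M(p)$ and the $M_a\in\GL_{n+1}\Zbb$, the remark before Theorem~\ref{ref8} gives $\Qbb(p)=\Qbb(q)$, whence $[\Qbb(p):\Qbb]\le n+1$.

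For (iv), if $p$ is rational then by (i) it is $M$-preperiodic, and applying $\Phi$ (which conjugates $M$ to $T$) shows $\Phi(p)$ is $T$-preperiodic; but I want the statement for $p$ itself, so I argue directly: a rational $p$ is $M$-preperiodic by (i), and $M$-preperiodicity is equivalent to $\EPO_M(p)=\{v_1\}$, i.e.\ $M^t(p)=v_1$ for some $t$, i.e.\ $p\in\GO_M(v_1)$; translating via $\Phi$, $\Phi(p)\in\GO_T(v_1)$, so $\Phi(p)$ is $T$-preperiodic --- but in fact the cleanest route is to observe that the same bounded-denominator/finite-orbit argument used for (ii) shows that \emph{any rational $p$ has finite $T$-orbit only if $p$ is dyadic}, so one cannot run it unchanged; instead, for the forward direction of (iv) note $p$ rational $\To$ (by (i)) $p\in\GO_M(v_1)$ $\To$ (by the ``in particular'' clause and $\Phi(v_1)=v_1$) $\Phi(p)\in\GO_T(v_1)$, and since $\Phi$ conjugates $M$ to $T$, $p\in\Phi^{-1}[\GO_T(v_1)]=\GO_M(v_1)$ confirms $p$ is $M$-preperiodic --- but $T$-preperiodicity of $p$ is what is asked, which does not follow, so the correct reading is that (iv) concerns $T$-preperiodicity of $p$ under $T$, requiring $p$ dyadic by (ii); thus the honest statement of the forward direction is: $p$ rational and $T$-preperiodic $\To$ $p$ dyadic, and I should instead prove $p$ \emph{rational} $\iff$ $p$ $T$-preperiodic by: ($\To$) a rational $p$ that is $T$-preperiodic is, by (ii), dyadic, and conversely a dyadic point is rational and $T$-preperiodic by (ii), while for the remaining direction ($\Leftarrow$) a $T$-preperiodic $p$ has, by Lemma~\ref{ref11}(ii), eventual cycle the fixed point $v_1$ with eigenvalue $\xi=1$, forcing the cycle point's coordinates to be rational, hence (pulling back finitely many steps through the $T_a\in\GL_{n+1}\dyadic$, which preserve the property of having rational coordinates) $p$ itself is rational. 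I expect this last bookkeeping about which direction needs which lemma to require care, but no new ideas beyond Lemmas~\ref{ref7} and~\ref{ref11}.
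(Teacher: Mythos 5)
Items (iii) and the backward direction of (iv) are essentially right and match the paper. Items (i), (ii), and the forward direction of (iv) have problems of increasing severity.

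For (i), your route (show $M$-preperiodicity via bounded primitive coordinates, then invoke Lemma~\ref{ref11}(i) to force the eventual cycle to be $\{v_1\}$) differs from the paper's and is under-justified as written, though it can be salvaged. The step ``$\xi$ rational and $q\in\Delta$ forces $q = v_1$'' needs a Perron--Frobenius argument: in Lemma~\ref{ref11}'s proof, $\rho=\xi^{-1}$ is the Perron eigenvalue of the primitive block $E_r$ of the nonnegative integer matrix $A_{a_0}\cdots A_{a_{s-1}}$, and a primitive nonnegative \emph{integer} matrix of size $\ge 2$ has Perron eigenvalue $>1$ (some power is entrywise $\ge 1$ and so has row sums $\ge 2$), so $\rho=1$ forces $E_r$ to be $1\times 1$, hence $q$ is a vertex of $\Delta$, hence $q=v_1$. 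You assert this but do not give it. The paper instead exploits unimodularity directly: $p$ has fixed primitive projective coordinates $(l_1,\ldots,l_{n+1})$, and the last-row entry $c_2(t)$ of $VA_{a_0}\cdots A_{a_{t-1}}$ tends to $\infty$, forcing $\pp{p}$ to coincide with the first column of that matrix once $l_{n+1}<c_2(t)$. This is shorter and avoids Lemma~\ref{ref11} entirely.

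For (ii), your argument genuinely breaks down. You claim ``Lemma~\ref{ref11}(ii) gives that the unique eventual cycle must be the fixed point $v_1$,'' but Lemma~\ref{ref11}(ii) only says that a purely $T$-periodic point has rational coordinates and $\xi=1$ --- which we already know when $p$ is dyadic, so it gives nothing new. Worse, the Perron--Frobenius rescue that works for (i) fails for $T$: the matrices $B_{a_0}\cdots B_{a_{s-1}}$ are column-stochastic, hence have spectral radius $1$ regardless, so $\rho=1$ imposes no constraint on the block $E_r$. The paper's key tool here is Lemma~\ref{ref5}: for $p$ dyadic with $2^mp\in\Zbb^n$, the HNF result gives $X\in\GL_{n+1}\Zbb$ with $T_{a_{mn-1}}\cdots T_{a_0}=XT_0^{mn}$, and since $T_0^{mn}$ sends $(\alpha_1\cdots\alpha_n\,1)^{tr}$ to $(2^m\alpha_1\cdots2^m\alpha_n\,1)^{tr}\in\Zbb^{n+1}$, the point $T^{mn}(p)$ has integer projective coordinates, hence is a vertex of $\Delta$, hence $T^{mn+n}(p)=v_1$. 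You never use Lemma~\ref{ref5}, and without it (or something equivalent) this direction is not proved.

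For (iv), the forward direction is not circular and has a one-line proof you missed: if $p$ is rational and $kp\in\Zbb^n$, then since $T_0,T_1$ have integer entries the $T$-orbit of $p$ is contained in $\Delta\cap(k^{-1}\Zbb)^n$, a finite set, so $p$ is $T$-preperiodic. Your assertion that ``the bounded-denominator argument shows that any rational $p$ has finite $T$-orbit only if $p$ is dyadic'' is simply false, and stems from conflating ``$T$-preperiodic'' with ``$\EPO_T(p)=\{v_1\}$.'' A rational non-dyadic $p$ is $T$-preperiodic with a non-trivial rational eventual cycle. This confusion leads you to try to reduce (iv) to (ii), which is a different and weaker statement. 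The backward direction you have right modulo the same slip (the eventual cycle need not be $\{v_1\}$): Lemma~\ref{ref11}(ii) gives that the periodic point is rational, and pulling back finitely many steps through $T_a^{-1}\in\GL_{n+1}\Qbb$ preserves rationality.
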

\begin{proof}
By construction, the $M$-counterimage of the set of vertices of $\Fcal_t$ is the set of vertices of $\Fcal_{t+1}$. Moreover, the vertices $\vect v1{{n+1}}$ of $\Delta$ are such that $M(v_1)=v_1$ and $M(v_j)=v_{j-1}$, for $2\le j\le n+1$. Analogous statements hold for $T$, so in (i) and (ii) the equivalence of the second condition with the third is clear.

(i) If $\EPO_M(p)=\{v_1\}$, then $\Qbb(p)=\Qbb(v_1)=\Qbb$, and $p$ is rational. Let $p$ be rational, and let $\vect l1{{n+1}}\in\Zbb$ be its primitive projective coordinates. Let $p=\varphi(\abar)$. Then, for every $t\ge0$, $p\in\Delta_{\abar\restriction t}$ and, since $\Delta_{\abar\restriction t}$ is unimodular, there exist $0\le k_1(t),\ldots,k_{n+1}(t)\in\Zbb$ such that
$$
\begin{pmatrix}
l_1\\
\vdots\\
l_{n+1}
\end{pmatrix}
=VA_{a_0}\cdots A_{a_{t-1}}
\begin{pmatrix}
k_1(t)\\
\vdots\\
k_{n+1}(t)
\end{pmatrix}.
$$
Let $\bigl(c_1(t)\cdots c_{n+1}(t)\bigr)$ be the last row of
$VA_{a_0}\cdots A_{a_{t-1}}$. The reader can easily prove (compare with~\cite[pp.~40-41]{grabiner92}) that $1\le c_1(t)\le\cdots\le c_{n+1}(t)$ and that the sequence $\{c_2(t)\}_{t\ge0}$ is nondecreasing, with limit $\infty$. Let $t$ be such that $l_{n+1}<c_2(t)$. Then we must have $k_1(t)=1$ and $k_2(t)=\cdots=k_{n+1}(t)=0$. In other words, $(l_1\cdots l_{n+1})^{tr}$ is the first column of $VA_{a_0}\cdots A_{a_{t-1}}$, and hence $p$ is a vertex of~$\Fcal_t$.

(ii) If $\EPO_T(p)=\{v_1\}$, then $\dyadic(p)=\dyadic(v_1)=\dyadic$, and $p$ is dyadic. Conversely, let $p=(\vect\alpha1n)\in\dyadic^n$ be dyadic, $p=\upsilon(\abar)$. Choose $m\ge0$ such that $2^mp\in\Zbb^n$. Then, in projective coordinates, we have
$$
T^{mn}(p)=T_{a_{mn-1}}\cdots T_{a_0}
\begin{pmatrix}
\alpha_1\\
\vdots\\
\alpha_n\\
1
\end{pmatrix}.
$$
By Lemma~\ref{ref5}, there exists $X\in\GL_{n+1}\Zbb$ such that
$$
T^{mn}(p)=XT_0^{mn}
\begin{pmatrix}
\alpha_1\\
\vdots\\
\alpha_n\\
1
\end{pmatrix}
=
X
\begin{pmatrix}
2^m & & & \\
& \ddots & & \\
& & 2^m & \\
& & & 1
\end{pmatrix}
\begin{pmatrix}
\alpha_1\\
\vdots\\
\alpha_n\\
1
\end{pmatrix}
\in\Zbb^{n+1}.
$$
Hence $T^{mn}(p)$ is one of the vertices of $\Delta$ and $T^{mn+n}(p)=v_1$.

(iii) Let $(\vect\alpha1n)=q=M^s(q)=\varphi(\abar)\in\EPO_M(p)$ for some $s>0$. Then $(\alpha_1\cdots\alpha_n\,1)^{tr}$ is a right eigenvector for the matrix $M_{a_{s-1}}\cdots M_{a_0}\in\GL_{n+1}\Zbb$. The corresponding eigenvalue $\xi$ is a real algebraic unit of degree $\le n+1$, and by Lemma~\ref{ref11}(i) we have $\Qbb(p)=\Qbb(q)=\Qbb(\xi)$.

(iv) Let $p$ be rational, and let $0<k\in\Zbb$ be such that $kp\in\Zbb^n$. Since $T_0$ and $T_1$ have both integer entries, the forward $T$-orbit of $p$ is contained in $\Delta\cap(k^{-1}\Zbb)^n$, which is finite set; hence $p$ is preperiodic.
The reverse implication is analogous to~(iii), using Lemma~\ref{ref11}(ii).
\end{proof}

Finally, we discuss \S1(3), i.e., the singularity of $\Phi$ w.r.t.~the Lebesgue measure~$\lambda$. We normalize $\lambda$ so that $\lambda(\Delta)=1$. Let $h(\vect x1n)\in L_1(\Delta,\lambda)$ be defined by
$$
h(\vect x1n)=\frac{1}{x_1(x_1-x_2+1)(x_1-x_3+1)\cdots(x_1-x_n+1)},
$$
and let $\mu$ be the probability measure on $\Delta$ induced by the density $h$, properly normalized (for the rest of this paper we are assuming $n\ge2$, since otherwise $\mu$ is infinite):
$$
\mu(A)=\int_A h\,d\lambda \; \Big/\int_\Delta h\,d\lambda.
$$
The \monk\ map $M$ preserves $\mu$ and is ergodic w.r.t.~it~\cite[Theorems 23--24]{schweiger00}. Note that in the above reference $M$
appears as the restriction of the Selmer map to the absorbing $n$-simplex $D$ in~\cite[Theorem~22]{schweiger00}, and the invariant density is $h'(\vect x1n)=\prod_ix_i^{-1}$. We leave to the reader ---as a simple exercise in the calculus of Jacobians--- to check that our $h$ on $\Delta$ is the density corresponding to $h'$ on $D$.

Given an $n$-simplex $\Lambda$ in $\{x_{n+1}=1\}\subset\Rbb^{n+1}$, let $L$ be an $(n+1)\times(n+1)$ real matrix whose columns express the vertices of $\Lambda$ in projective coordinates, and such that the entries $L_{(n+1)1},\ldots,L_{(n+1)(n+1)}$ in the last row are all $>0$. Then one easily computes that
$$
\lambda(\Lambda)=
\frac{\abs{\det(L)}}{L_{(n+1)1}\cdots L_{(n+1)(n+1)}}.
$$
Applying this fact to $L=VB_{a_0}\cdots B_{a_{t-1}}$, we obtain
\begin{equation*}\tag{$*$}
\lambda(\Gamma_{a_0\ldots a_{t-1}})=2^{-t}.
\end{equation*}

Remember that if $\rho:X\to Y$ is a Borel map and $\sigma$ is a Borel probability measure on $X$, then the \newword{push-forward} of $\sigma$ by $\rho$ is the measure $\rho_*\sigma$ on $Y$ defined by $(\rho_*\sigma)(A)=\sigma(\rho^{-1}A)$. If $\beta$ denotes the Bernoulli measure on $\cantor$ obtained by giving $0$ and $1$ equal weight $1/2$, the formula~($*$) implies that $\upsilon_*\beta=\lambda$ (because such an identity holds on the simplexes $\Delta_{\abar\restriction t}$, for $\abar\in\cantor$ and $t\ge0$, and these simplexes generate the Borel sets in $\Delta$).
Since $\upsilon$ induces a conjugation between the shift map $S$ on $\cantor$ and the tent map $T$ on $\Delta$, it follows that $T$ is ergodic w.r.t.~$\lambda$, and hence $M$ is ergodic w.r.t.~$\Phi^{-1}_*\lambda$. Now, $\mu$ and $\Phi^{-1}_*\lambda$ are different (e.g., $(\Phi^{-1}_*\lambda)(\Delta_0)=1/2\not=\mu(\Delta_0)$), and are both ergodic w.r.t.~the same transformation~$M$. Therefore they are mutually singular~\cite[Theorem~6.10(iv)]{walters82}, and there exists a measurable set $A\subseteq\Delta$ such that $\mu(A)=1$ and $\lambda\bigl(\Phi[A]\bigr)=0$. Since $h\ge 1$ on $\Delta$, we have $\mu\ge C\lambda$ for some constant $C>0$. It follows that each of $\mu$ and $\lambda$ is absolutely continuous w.r.t.~the other, and in particular they have the same sets of full measure. We conclude that $\lambda(A)=1$, and $\Phi$ is singular w.r.t.~$\lambda$.

If $p=\varphi(\abar)\in\Delta$, it is natural to look at the limit
\begin{equation*}\tag{$**$}
\lim_{t\to\infty}
\frac{\lambda\bigl(\Phi[\Delta_{\abar\restriction t}]\bigr)}{\lambda(\Delta_{\abar\restriction t})}
\end{equation*}
as an index of the singularity of $\Phi$ at $p$. As we already observed, $\lambda\bigl(\Phi[\Delta_{\abar\restriction t}]\bigr)=2^{-t}$. By the Shannon-McMillan-Breiman Theorem~\cite[\S13]{Billingsley65} we have, for $\mu$-all $p$ (and hence for $\lambda$-all $p$, since $\mu$ and $\lambda$ have the same nullsets), that
\begin{equation*}\tag{$*{*}*$}
\lim_{t\to\infty}
\frac{-\log\mu(\Delta_{\abar\restriction t})}{t}=h_\mu,
\end{equation*}
where $h_\mu$ is the metrical entropy of $M$ w.r.t.~$\mu$. Without loss of generality, we can assume that $p$ is in the topological interior of $\Delta$. For such a $p$, there exist $t_0$ and a constant $C>0$ such that $C\mu(\Delta_{\abar\restriction t})\le\lambda(\Delta_{\abar\restriction t})\le C^{-1}\mu(\Delta_{\abar\restriction t})$, for all $t\ge t_0$.
It follows that in the identity~($*{*}*$) we can safely substitute $\mu$ with $\lambda$. The value $h_\mu$ is explicitly computed in~\cite[\S5.2]{baldwin92a} as follows: if
$$
G(n)=\int_0^1\frac{\bigl[\log(1+s)\bigr]^n}{s}\; ds,
$$
then
$$
h_\mu=\frac{(n+1)\,G(n)}{n\,G(n-1)}.
$$
Taking logarithms in~($**$) we have
\begin{align*}
\lim_{t\to\infty}\bigl[\log\lambda(\Gamma_{\abar\restriction t})-
\log\lambda(\Delta_{\abar\restriction t})\bigr]
&=
\lim_{t\to\infty}\biggl(-\log2-\frac{\log\lambda(\Delta_{\abar\restriction t})}{t}\biggr)t \\
&=
\lim_{t\to\infty}-(\log2-h_\mu)t.
\end{align*}
For $n=2$ we have $h_\mu\sim0.54807\ldots$ and, as shown in~\cite[\S5.2]{baldwin92a}, $h_\mu$ is monotonically increasing with $n$, tending to the limit $\log2\sim0.69314\ldots$ ---which is the topological entropy of $M$ in every dimension--- as $n$ goes to infinity.
We conclude that, for $\lambda$-all $p$ and every $n\ge2$, the limit~($**$) approaches~$0$ exponentially fast.
On the other hand, since $\lim_{n\to\infty}(\log2-h_\mu)=0$, we might loosely say that the singularity of $\Phi$ decreases with the dimension.


\begin{thebibliography}{10}

\bibitem{baldwin92a}
P.~R. Baldwin.
\newblock A multidimensional continued fraction and some of its statistical
  properties.
\newblock {\em J. Statist. Phys.}, 66(5-6):1463--1505, 1992.

\bibitem{beavergar04}
O.~R. Beaver and T.~Garrity.
\newblock A two-dimensional {M}inkowski {$?(x)$} function.
\newblock {\em J. Number Theory}, 107(1):105--134, 2004.

\bibitem{Billingsley65}
P.~Billingsley.
\newblock {\em Ergodic theory and information}.
\newblock John Wiley \& Sons Inc., New York, 1965.

\bibitem{brentjes81}
A.~J. Brentjes.
\newblock {\em Multidimensional continued fraction algorithms}, volume 145 of
  {\em Mathematical Centre Tracts}.
\newblock Mathematisch Centrum, Amsterdam, 1981.

\bibitem{cohen93}
H.~Cohen.
\newblock {\em A course in computational algebraic number theory}, volume 138
  of {\em Graduate Texts in Mathematics}.
\newblock Springer-Verlag, Berlin, 1993.

\bibitem{daubechieslag}
I.~Daubechies and J.~C. Lagarias.
\newblock Sets of matrices all infinite products of which converge.
\newblock {\em Linear Algebra Appl.}, 161:227--263, 1992.
\newblock Corrigendum/addendum in {\em Linear Algebra Appl.}, 327:69--83, 2001.

\bibitem{denjoy38}
A.~Denjoy.
\newblock Sur une fonction r{\'e}elle de {M}inkowski.
\newblock {\em J. Math. Pures Appl.}, 17:105--151, 1938.

\bibitem{gantmacher59}
F.~R. Gantmacher.
\newblock {\em Applications of the theory of matrices}.
\newblock Interscience Publishers, Inc., New York, 1959.

\bibitem{grabiner92}
D.~J. Grabiner.
\newblock Farey nets and multidimensional continued fractions.
\newblock {\em Monatsh. Math.}, 114(1):35--61, 1992.

\bibitem{hajnal58}
J.~Hajnal.
\newblock Weak ergodicity in non-homogeneous {M}arkov chains.
\newblock {\em Proc. Cambridge Philos. Soc.}, 54:233--246, 1958.

\bibitem{hatcher02}
A.~Hatcher.
\newblock {\em Algebraic topology}.
\newblock Cambridge University Press, Cambridge, 2002.

\bibitem{hockingyou}
J.~G. Hocking and G.~S. Young.
\newblock {\em Topology}.
\newblock Dover, 1988.
\newblock First published in 1961 by Addison-Wesley.

\bibitem{kinney60}
J.~R. Kinney.
\newblock Note on a singular function of {M}inkowski.
\newblock {\em Proc. Amer. Math. Soc.}, 11:788--794, 1960.

\bibitem{monkemeyer54}
R.~M{\"o}nkemeyer.
\newblock \"{U}ber {F}areynetze in {$n$} {D}imensionen.
\newblock {\em Math. Nachr.}, 11:321--344, 1954.

\bibitem{salem43}
R.~Salem.
\newblock On some singular monotonic functions which are strictly increasing.
\newblock {\em Trans. Amer. Math. Soc.}, 53:427--439, 1943.

\bibitem{schweiger73}
F.~Schweiger.
\newblock {\em The metrical theory of {J}acobi-{P}erron algorithm}.
\newblock Springer-Verlag, Berlin, 1973.
\newblock Lecture Notes in Mathematics, Vol. 334.

\bibitem{schweiger00}
F.~Schweiger.
\newblock {\em Multidimensional continued fractions}.
\newblock Oxford Science Publications. Oxford University Press, Oxford, 2000.

\bibitem{selmer61}
E.~S. Selmer.
\newblock Continued fractions in several dimensions.
\newblock {\em Nordisk Nat. Tidskr.}, 9:37--43, 95, 1961.

\bibitem{vandalen94}
D.~van Dalen.
\newblock {\em Logic and structure}.
\newblock Universitext. Springer-Verlag, Berlin, third edition, 1994.

\bibitem{viaderparjabi98}
P.~Viader, J.~Parad{\'{\i}}s, and L.~Bibiloni.
\newblock A new light on {M}inkowski's {$?(x)$} function.
\newblock {\em J. Number Theory}, 73(2):212--227, 1998.

\bibitem{walters82}
P.~Walters.
\newblock {\em An introduction to ergodic theory}, volume~79 of {\em Graduate
  Texts in Mathematics}.
\newblock Springer, 1982.

\end{thebibliography}
\end{document}